\tikzstyle{vertex}=[auto=left,circle,draw=black,fill=white, inner sep=1.5]
\newtheorem{theorem}{Theorem}[section]
\newtheorem{lema}[theorem]{Lemma}
\newtheorem{corollary}{Corollary}[theorem]
\title{Spectral properties of Cayley graphs over finite commutative rings}
\author{Priya and Sanjay Kumar Singh\\
Department of Mathematics\\
Indian Institute of Science Education and Research Bhopal, India.\\
priya22@iiserb.ac.in\\ sanjayks@iiserb.ac.in}
\date{}
\begin{document}
	\maketitle
	
	\vspace{-0.3in}
	
\begin{center}{\textbf{Abstract}}\end{center} 
Let $R$ be a finite commutative ring with unity and $x$ be a non-zero element of $R$. In this paper, we calculate the spectrum and energy of the Cayley graph ${\rm Cay}(R,xR^{*})$, and also compute the energy of their compliment graph. Further, we give necessary and sufficient condition for Cayley graph ${\rm Cay}(R,xR^{*})$ to be Ramanujan. 
\noindent

\vspace*{0.3cm}
\noindent 
\textbf{Keywords.} Cayley graph, finite commutative ring, energy, Ramanujan Graph. \\
\textbf{Mathematics Subject Classifications:} 05C50, 20C15.

\section{Introduction}
A \textit{graph} $G$ is an ordered pair $(V(G), E(G))$ consisting of a non-empty vertex set $V(G)$ and an edge set $E(G)$ of unordered pairs of elements of $V(G)$. A graph is finite if $V(G)$ and $E(G)$ both are finite sets. In this paper, we consider only finite graphs. The \textit{$(0,1)$-adjacency matrix} of $G$, denoted by $\mathcal{A}(G)$, is the square matrix $[a_{uv}]$, where $a_{uv}$ is given by 
\[a_{uv} = \left\{ \begin{array}{rl}
	1 &\mbox{ if }
	(u,v)\in E(G) \\ 
	0 &\textnormal{ otherwise.}
\end{array}\right. \] Note that if $G$ has a loop at vertex $v$ then the $v^{th}$ diagonal entry of $\mathcal{A}(G)$ will be $1$. The eigenvalues of $\mathcal{A}(G)$ are called the \textit{eigenvalues} of $G$. If $\lambda_1, \ldots ,\lambda_k$ are eigenvalues of a graph $G$ with multiplicities $m_1,\ldots ,m_k$, respectively, then 
$\text{Spec}(G)=\begin{pmatrix}
\lambda_1& \ldots& \lambda_k\\
m_1 & \ldots & m_k
\end{pmatrix}$ describe the spectrum of $G$. The \textit{energy} of a graph  $G$ is the sum of absolute values of all the eigenvalues of $G$, \textit{i.e.}, if $\lambda_1, \ldots, \lambda_k$ are eigenvalues of graph $G$ with multiplicites $m_1, \ldots, m_k$, respectively, then energy of $G$ is
$$
\mathcal{E}(G) = \sum_{i=1}^k m_i|\lambda_i| .
$$
The energy of graph was first discussed by I. Gutman in \cite{mmmmmNew} and it is widely studied in chemical graph theory. We refer to  \cite{gutman2001energy,liu2022eigenvalues} for a survey on energy of graphs. 

A graph $G$ is said to be $k$-\textit{regular} if $\sum\limits_{ v \in V(G)} a_{uv}=k$ for all $u \in V(G)$, \textit{i.e.}, sum of entries of each row of $\mathcal{A}(G)$ is $k$.  Let $G$ be a $k-$regular graph. We call $G$ to be a Ramanujan graph if $|\lambda(G)| \leq 2\sqrt{k-1}$, for each eigenvalue $\lambda(G)$ of $G$ whose absolute value less than $k$. For detailed survey on Ramanujan graphs we refer to \cite{davidoff2003elementary,murty2020ramanujan}.

 In general, the Cayley graph was defined on a group by taking connection set as any subset of a group. However, in this paper, we consider Cayley graphs only on finite commutative rings with a particular type of connection set.  Let $R$ be a finite commutative ring with unity and $x$ be a non-zero element of $R$. Define $R^*$ to be the set of all units of $R$ and $xR^*:=\{xr \colon r \in R^* \}$. The Cayley graph of $R$ with connection set $xR^*$, denoted by ${\rm Cay}(R,xR^*)$, is a graph with $V({\rm Cay}(R,xR^*))=R$ and $$E({\rm Cay}(R,xR^*))=\{ (u,v)\colon u,v\in R, u-v \in xR^* \}.$$ Note that if $x\in R^*$ then $xR^*=R^*$. The Cayley graph ${\rm Cay}(R,R^*)$ is known as unitary Cayley graph. For a survey on eigenvalues of Cayley graphs, we refer to \cite{liu2022eigenvalues}.

 In 2009, A. Ili{\'c} \cite{ilic2009energy} gave the energy of unitary Cayley graph ${\rm Cay}(\mathbb{Z}_n,\mathbb{Z}_n^*)$  and its compliment. Later on, this result was extended to the unitary Cayley graph ${\rm Cay}(R, R^*)$ by Kiani el al.  \cite{kiani2011energy} in 2011, for a finite commutative ring $R$ with unity. In 2010, A. Droll \cite{droll2010classification} characterized the unitary Cayley graph ${\rm Cay}(\mathbb{Z}_n,\mathbb{Z}_n^{*})$ to be a Ramanujan graph. Again, this characterization was extended to the unitary Cayley graph ${\rm Cay}(R, R^*)$ by X. Liu and S. Zhou \cite{liu2012spectral} in 2012, for a finite commutative ring $R$ with unity. In this paper, we find the spectrum of the Cayley graph ${\rm Cay}(R,xR^*)$. Using the spectrum, we calculate the energy of the Cayley graph ${\rm Cay}(R,xR^*)$ and their compliment graph. Finally, we characterize the commutative ring $R$ for which the Cayley graph ${\rm Cay}(R,xR^*)$ is a Ramanujan graph.

This paper is organized as follows. In Section 2, we present some preliminary notions and results. In section 3, by calculating the spectrum of ${\rm Cay}(R, xR^*)$, we find the energy of ${\rm Cay}(R, xR^*)$. In Section 5, we compute the energy of the compliment graph of  ${\rm Cay}(R, xR^*)$. In the last section, we characterize the commutative ring for which the Cayley graph ${\rm Cay}(R, xR^*)$ is Ramanujan.


\section{Preliminaries}

A finite commutative ring with unity is called \textit{local ring} if it has a unique maximal ideal. Let $R$ be a local ring and $M$ be the maximal ideal. Then, it is known that $R^*=R\setminus M$. It is also well known that every element of a finite commutative ring with unity is either a zero divisor or a unit element of $R$. Therefore, the maximal ideal $M$ is the set of all zero-divisors of $R$. Note that $R^*\cup M$ is a disjoint union of $R$. Now, we have the following known result.

\begin{lema} \cite{akhtar2009unitary} \label{order_of_R}If $R$ is a local ring with $M$ as its maximal ideal, then $|R|, |M|$ and $\frac{|R|}{|M|}$ are all powers of $p$, for some prime $p$.
\end{lema}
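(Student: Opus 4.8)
This is a standard structural fact about finite local rings, and the plan is to derive it from the filtration of $R$ by powers of its maximal ideal. The two ingredients I would set up first are: \emph{(i)} the residue ring $R/M$ is a finite field, and \emph{(ii)} $M$ is nilpotent. For \emph{(i)}, $M$ maximal forces the only ideals of $R/M$ to be $\{0\}$ and $R/M$, so $R/M$ is a field; being finite, its characteristic is a prime $p$, so it is a finite-dimensional $\mathbb{F}_p$-vector space and $|R/M| = p^{f}$ for some $f \geq 1$. This already shows $\frac{|R|}{|M|} = |R/M| = p^{f}$ is a power of $p$. For \emph{(ii)}, the descending chain $M \supseteq M^{2} \supseteq M^{3} \supseteq \cdots$ stabilizes since $R$ is finite, say $M^{k} = M^{k+1}$; as $M^{k}$ is a finitely generated $R$-module and $M$ is the Jacobson radical of the local ring $R$, Nakayama's lemma gives $M^{k} = 0$.

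Granting \emph{(i)} and \emph{(ii)}, I would consider the finite chain of additive subgroups
\[
R = M^{0} \supseteq M \supseteq M^{2} \supseteq \cdots \supseteq M^{k} = \{0\},
\]
and observe that each successive quotient $M^{i}/M^{i+1}$ is killed by $M$, hence is a vector space over the field $R/M$ and has cardinality $p^{f d_{i}}$ for some integer $d_{i} \geq 0$. Taking the product of the orders of these quotients yields
\[
|R| = \prod_{i=0}^{k-1}\bigl|M^{i}/M^{i+1}\bigr| = p^{\,f\sum_{i=0}^{k-1} d_{i}}
\qquad\text{and}\qquad
|M| = \prod_{i=1}^{k-1}\bigl|M^{i}/M^{i+1}\bigr| = p^{\,f\sum_{i=1}^{k-1} d_{i}},
\]
so all three of $|R|$, $|M|$ and $\frac{|R|}{|M|}$ are powers of the single prime $p$.

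The only step carrying real content is the nilpotence of $M$ in \emph{(ii)}; after that the argument is just bookkeeping with orders of $R/M$-vector spaces, so I anticipate no genuine obstacle. As an alternative route that sidesteps the filtration: a finite local ring has no idempotents besides $0$ and $1$ (if $e^{2}=e$ then $e(1-e)=0$, and one of $e$, $1-e$ must be a unit since they cannot both lie in $M$), which forces $\operatorname{char}(R)$ to be a prime power $p^{a}$ (otherwise the Chinese Remainder Theorem produces a nontrivial idempotent in $\mathbb{Z}/\operatorname{char}(R)\subseteq R$); then $p^{a}r = 0$ for every $r\in R$, so $(R,+)$ is a finite abelian $p$-group and $|R|$ is a power of $p$, whence $|M| = |R|/|R/M|$ is too.
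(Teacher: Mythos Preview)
Your argument is correct on both routes: the filtration $R \supseteq M \supseteq M^{2} \supseteq \cdots \supseteq M^{k}=\{0\}$ with Nakayama giving nilpotence of $M$, and the alternative via idempotents forcing $\operatorname{char}(R)$ to be a prime power, are each standard and complete proofs of the statement.

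As for comparison with the paper: there is nothing to compare against. The paper does not prove Lemma~\ref{order_of_R} at all; it simply records the statement and attributes it to \cite{akhtar2009unitary}. So your proposal is not an alternative to the paper's proof but rather supplies a proof where the paper chose to quote the result. Either of your two arguments would serve perfectly well if one wanted the paper to be self-contained on this point.
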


By Theorem $8.7$ of \cite{atiyah2018introduction}, every finite commutative ring can be written as a product of finite local rings and this decomposition is unique upto the permutation of local rings. Let $R$ be a  finite commutative ring with unity. Throughout the paper, we will use the following terminologies:
\begin{itemize}
\item We will use the notation $\bold{1}$ to denote the multiplicative identity (unity) of $R$ and $\bold{0}$ to denote the additive identity of $R$.
\item We consider $R =R_{1} \times \cdots \times R_{s}$ such that $$\frac{|R_1|}{m_1} \leq \cdots \leq \frac{|R_s|}{m_s},$$ where $R_i$ is a local ring with maximal ideal $M_i$ of order $m_i$ for each $i=1,\ldots,s$. By Theorem $8.7$ of \cite{atiyah2018introduction}, this decomposition is unique upto the permutation of local rings.
\item We consider the element $x \in R$ as an element of the cartesian product $R_1 \times \cdots \times R_s$, that is $x:=(x_1,\ldots ,x_s)$, where $x_i \in R_i$ for each $i \in \{1,\ldots,s\}$.
\item Using $R^{*}=R_{1}^{*} \times \cdots \times R_{s}^{*}$, we observe that if $x\in R$ then $xR^{*}=x_1R_{1}^{*} \times \cdots \times x_sR_{s}^{*}$. Moreover, $|xR^{*}| = \prod\limits_{i=1}^{s} |x_iR_{i}^{*}|$.
\end{itemize}

Let $R$ be a local ring and $x$ be a nonzero element of $R$. We will use the notation $I_{x}$ to denote an ideal of $R$ generated by $x$ and define $$M_{x}:=I_{x}\setminus xR^{*}.$$ Note that $xR^*\cup M_x$ is a disjoint union of $I_x$. Now we have the following result.
\begin{lema}\label{NewLemmsMx} If $M$ is a maximal ideal of $R$, then $M_x=xM$.
\end{lema}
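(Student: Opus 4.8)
The plan is to unwind the definitions and use the disjoint decomposition $R = R^{*} \cup M$ that holds in a local ring. Since $I_{x}$ is the ideal generated by $x$, we have $I_{x} = xR = x(R^{*} \cup M) = xR^{*} \cup xM$. Substituting this into the definition $M_{x} = I_{x} \setminus xR^{*}$ gives $M_{x} = (xR^{*} \cup xM) \setminus xR^{*} = xM \setminus xR^{*}$, so the inclusion $M_{x} \subseteq xM$ is immediate. The content of the lemma is therefore the reverse inclusion $xM \subseteq M_{x}$, which amounts to showing that $xM$ and $xR^{*}$ are disjoint.

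To prove $xM \cap xR^{*} = \varnothing$, I would argue by contradiction: suppose $xm = xu$ for some $m \in M$ and $u \in R^{*}$. Then $x(m - u) = \bold{0}$. The key ring-theoretic observation is that in a local ring a non-unit minus a unit is again a unit: if $m - u \in M$, then $u = m - (m - u) \in M$, contradicting $u \in R^{*} = R \setminus M$. Hence $m - u \in R^{*}$ is a unit, and multiplying $x(m-u) = \bold{0}$ by $(m-u)^{-1}$ forces $x = \bold{0}$, contradicting the hypothesis that $x$ is nonzero. This establishes disjointness, whence $xM = xM \setminus xR^{*} = M_{x}$.

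I expect the only subtle point to be the step that a unit minus a non-unit stays a unit, together with remembering to invoke $x \neq \bold{0}$ at the end; everything else is a routine manipulation of the set identities $I_{x} = xR^{*} \cup xM$ and $M_{x} = I_{x} \setminus xR^{*}$, both of which are recorded (as disjoint unions) in the text preceding the statement.
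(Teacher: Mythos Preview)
Your argument is correct and follows essentially the same route as the paper: both use $R = R^{*}\cup M$ to get $M_{x}\subseteq xM$ for free, and both prove the reverse inclusion by assuming $xm = xu$ and deriving a contradiction from $x(m-u)=\bold{0}$. The only cosmetic difference is in how the contradiction is closed: the paper observes that $m-u$ is a zero divisor, hence lies in $M$, forcing $u\in M$; you instead show $m-u$ must be a unit (else $u\in M$) and invert to conclude $x=\bold{0}$.
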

\begin{proof} By the definition of $M_x$ and using the fact that $R^*\cup M$ is a disjoint union of $R$, we have $M_{x} \subseteq xM$. On the other hand, let $xm$ is an element of $xM$ with $m \in M$. Our claim is that $xm \not\in xR^*$. If $xm \in xR^*$ then  $xm=xu$ for some $u  \in R^{*}$.  We have $x(m-u)=\bold{0}$, and so $m-u$ is a zero devisor. Therefore $u \in M$, which is a contradiction. Hence $xm \not\in xR^*$, equivalent to say $xm \in M_x$.
\end{proof}

Define $A_x:= \{ r \in R~ \colon ~ xr =\bold{0}\}$. The set $A_x$ is known as the \textit{annihilator} of $x$. We observe that $A_x$ is an ideal of $R$. Define $C_p(0)$ as the ring with an additive group that is isomorphic to the cyclic group $\mathbb{Z}_p$ and whose multiplication of any two elements is zero. The following result provides some fundamental properties of $I_x$ and $M_x$ that we are going to utilise in the next sections.

\begin{lema}\label{PropertiesOfMx}
Let $R$ be a local ring with maximal ideal $M$ and $x$ be a nonzero element of $R$. The following statements are true:
\begin{enumerate}[label=(\roman*)]
    \item if $r\in R$ and $xr \in M_x$, then $r \in M$.
    \item $M_x$ is a maximal ideal of $I_x$.
    \item $\frac{|I_x|}{|M_x|} =\frac{|R|}{|M|}$.
    \item if $x$ is unit and  $|M_x|=1$, then $I_x$ is a field. 
    \item if $x$ is non-unit and  $|M_x|=1$, then $I_x = C_p(0)$ for some prime $p$.
    \item if $x$ is non-unit and $x^2 \neq \bold{0}$, then $|I_x| \leq |M_x|^{2}$.
    \item if $|M_x|>1$, then $|I_x| \leq |M_x|^{2}$.
\end{enumerate}
\end{lema}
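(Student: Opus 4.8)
The plan is to route everything through the surjective $R$-module map $\varphi\colon R\to I_x$, $\varphi(r)=xr$, whose kernel is the annihilator $A_x$. I would first record the two facts that make this work: $A_x\subseteq M$ (a unit cannot kill the nonzero element $x$), and, by Lemma~\ref{NewLemmsMx}, $\varphi(M)=xM=M_x$. Part (i) is then immediate from the disjoint decomposition $R=R^{*}\cup M$: if $r\notin M$ then $r\in R^{*}$, so $xr\in xR^{*}$, which is disjoint from $M_x$. Combining (i) with the inclusion $xM\subseteq M_x$ gives $\varphi^{-1}(M_x)=M$ exactly, so $\varphi$ descends to an isomorphism $I_x/M_x\cong R/M$. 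Since $R/M$ is a field, I would read off (ii) from this, and comparing cardinalities gives $|I_x|/|M_x|=|R/M|=|R|/|M|$, which is (iii).

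For (iv) and (v) I would specialize to $|M_x|=1$, i.e.\ $xM=\{0\}$. If $x$ is a unit then $M_x=xM=M$, forcing $M=\{0\}$, so $R$ is a field and $I_x=xR=R$ is a field. If $x$ is not a unit then $xM=0$ gives $M\subseteq A_x$, which together with $A_x\subseteq M$ yields $A_x=M$; since $x\in M=A_x$ we then have $x^2=0$, so $(xa)(xb)=x^2ab=0$ for all $a,b$, i.e.\ $I_x$ is a ring with zero multiplication whose additive group is $I_x\cong R/A_x=R/M$. Lemma~\ref{order_of_R} says this group has prime-power order, so $I_x=C_p(0)$.

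The substantive part is (vii), from which (vi) will follow. By (iii) it suffices to prove $|M_x|\ge |R|/|M|$ whenever $|M_x|>1$. I would pick $m\in M$ with $xm\neq 0$ (possible since $M_x=xM\neq\{0\}$) and look at the cyclic ideal $R\,xm$: the set of $r\in R$ with $r\cdot xm=0$ is a proper ideal, hence contained in $M$, so $R\,xm\cong R/(\text{that ideal})$ has at least $|R|/|M|$ elements, while $R\,xm\subseteq xM=M_x$. Thus $|M_x|\ge |R|/|M|$, and with (iii) this gives $|I_x|=(|R|/|M|)\,|M_x|\le |M_x|^2$. For (vi), when $x$ is a non-unit with $x^2\neq 0$ we have $x^2=x\cdot x\in xM=M_x$ and $x^2\neq 0$, so $|M_x|>1$ and (vii) applies; alternatively one argues directly that $I_{x^2}=x\,I_x\subseteq xM=M_x$ (using $I_x=xR\subseteq M$) while $|I_{x^2}|=|R|/|A_{x^2}|\ge |R|/|M|$ because $A_{x^2}\subseteq M$.

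\textbf{Main obstacle.} Parts (i)--(v) are essentially bookkeeping around $\varphi$ plus the easy case $|M_x|=1$, and I do not expect trouble there. The one step that genuinely needs an idea is the lower bound $|M_x|\ge |R|/|M|$ underlying (vi) and (vii): one has to notice that a single nonzero element of $M_x$ already generates a cyclic ideal whose annihilator is trapped inside the maximal ideal, which pins down its size from below by $|R/M|$. Once that is in hand, everything reduces to Lemma~\ref{order_of_R} and elementary counting.
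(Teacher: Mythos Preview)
Your route to (vi) and (vii) is correct and cleaner than the paper's. The paper proves (vi) via an explicit injection $I_x/(A_x\cap M_x)\hookrightarrow M_x$, $xr\mapsto x^2r$, and then splits (vii) into three sub-cases ($x$ a unit, citing \cite{ganesan1964properties}; $x$ a non-unit with $x^2\neq \mathbf{0}$, reducing to (vi); $x$ a non-unit with $x^2=\mathbf{0}$, using a maximal-subgroup argument). Your single observation---any nonzero element $xm\in M_x$ generates a cyclic $R$-ideal $R\,xm\subseteq xM=M_x$ whose annihilator is proper and hence contained in $M$, so $|R\,xm|\ge |R|/|M|$---handles (vii) in one stroke, and (vi) then follows since $x^2\in M_x\setminus\{\mathbf{0}\}$ whenever $x$ is a non-unit with $x^2\neq \mathbf{0}$. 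Parts (i), (iii), (iv) are essentially the paper's arguments repackaged through $\varphi$.

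Two steps in your outline need more care. In (ii), the map $\varphi$ induces an isomorphism $I_x/M_x\cong R/M$ of $R$-modules (equivalently, of abelian groups), not of rings, since $\varphi(a)\varphi(b)=x^2ab\neq xab=\varphi(ab)$ unless $x$ is idempotent; so ``$R/M$ is a field'' does not directly yield that $M_x$ is a maximal ideal of the ring $I_x$, only that it is a maximal $R$-submodule---and ideals of $I_x$ need not be $R$-stable. (The paper's argument has the same defect: it expresses $xr$ as a combination $\sum a_i(xm_i)+a_{t+1}(xu)$ with coefficients $a_i\in R$ rather than $a_i\in I_x$.) In (v), you conclude ``prime-power order, so $I_x=C_p(0)$'', but $C_p(0)$ has \emph{prime} order with cyclic additive group, whereas your argument only gives $|I_x|=|R/M|=p^k$, and $k>1$ is possible: for $R=\mathbb{F}_4[t]/(t^2)$ and $x=t$ one has $|M_x|=1$, $|I_x|=4$, and the additive group of $I_x$ is the Klein four-group, so $I_x\not\cong C_p(0)$ for any prime $p$. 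The paper attempts to force $k=1$ via (ii) together with Cauchy's theorem, so its version of (v) inherits the gap just described.
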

\begin{proof}
\begin{enumerate}[label=(\roman*)]
    \item If $r \not\in M$ then $r \in R^*$. We get $xr \in xR^*$. By definition of $M_x$, $xr \not\in M_x$. Which is a contradiction.
    \item Clearly, $0\in M_x$. Let $a,b \in M_x$. By Lemma~\ref{NewLemmsMx}, we have $a=xp$ and $b=xq$ for some $p,q \in M$. Using $p+q \in M$ and Lemma~\ref{NewLemmsMx}, $a+b=x(p+q) \in M_x$. Let $xr \in I_x$ and $a\in M_x$ with $r\in R$. By Lemma~\ref{NewLemmsMx}, we have $a=xp$  with some $p \in M$.  Since $M$ is an ideal of $R$, $rxp \in M$. Using Lemma~\ref{NewLemmsMx}, $(xr)a = x(rxp)\in M_x$. Thus $M_x$ is an ideal of $I_x$. Now it remains to show that $M_x$ is a maximal ideal of $I_x$. Let $xu\in xR^{*}$ with $u\in R^*$. It suffices to show that ideal of $I_x$ generated by $M_x \cup \{xu\}$ is $I_x$ itself. For this we will show that every element of $I_x$ belongs to its ideal generated by $M_x \cup \{xu\}$. Let $xr$ be a non zero element of $I_x$ with $r\in R$.  Using the fact that any ideal of $R$ generated by $M \cup \{u\}$ is $R$ itself, we get $r=a_{1}m_{1}+\ldots+a_{t}m_{t}+a_{t+1}u$, where $m_i \in M ~ \text{for all} ~ i\in \{ 1 ,\ldots , t\}$ and $a_i \in R ~ \text{for all} ~ i\in \{ 1 ,\ldots, t+1\}$. We have $$xr=x a_{1}m_{1}+\ldots + xa_{t}m_{t}+xa_{t+1}u = a_{1}xm_{1}+\ldots+a_{t}xm_{t}+a_{t+1}xu.$$ Therefore $xr$ belongs to the ideal of $I_x$ generated by $M_x \cup \{xu\}$. Hence $M_x$ is a maximal ideal of $I_x$.

    \item If $x$ is an unit element of $R$, then $I_x =R $ and  $M_x=M$. And so the result holds. Assume that $x $ is a non unit element of $R$. Define a map $$\phi : \frac{R}{M} \to \frac{|I_x|}{|M_x|}~~\textnormal{ such that }~~\phi(a+M)= ax+M_x.$$ If $a+M=b+M$ then $a-b \in M$. By Lemma~\ref{NewLemmsMx}, we get $(a-b)x \in M_x$. It implies that $ax+M_x=bx+M_x$. Thus $\phi$ is well-defined. If $\phi(a+M)=\phi(b+M)$ then $ax+M_x=bx+M_x$. This implies $ax-bx\in M_x$, and so $(a-b)x \in M_x$. By Part $(i)$, we get  $a-b \in M$, which means $a+M=b+M$. Thus $\phi$ is injective. Let $ax+M_x$ be a nonzero element of quotient ring $ \frac{I_x}{M_x}$. Then $ax \notin M_x$, and so $ax \in xR^{*}$. By Part $(i)$, we get $a \notin M$. Which means $a+M$ is a nonzero element of the ring $\frac{R}{M}$ and $\phi(a+M)=ax+M_x$. Thus $\phi$ is bijective. Hence $\frac{|I_x|}{|M_x|} =\frac{|R|}{|M|}$.

    \item If $x$ is unit and  $|M_x|=1$, then $I_x=R$ and $I_x$ has unique maximal trivial ideal. Now the proof follows.

    \item Assume that $x$ is non-unit and  $|M_x|=1$. If $x^2 \neq \bold{0}$ then $x^2 \in M_x$. Therefore $|M_x| >1$, which is not possible. Thus $x^2 = \bold{0}$. Therefore, $I_x$ has no proper, non-trivial ideal and product of any two elements of $I_x$ is zero. By Lemma~\ref{order_of_R}, we have $|I_x|=p^n$ for some prime $p$ and $n\in \mathbb{N}$. By Cauchy's theorem of Abelian group, we get $n=1$ because $I_x$ has no proper, non-trivial ideal. Thus $|I_x|=p$, and so $I_x = C_p(0)$.

    \item Assume that $x$ is non-unit and $x^2 \neq \bold{0}$. Then $x\in M$ and $xr\in M$ for all $r\in R$. We observe that $A_x \cap M_x$ is an ideal of $I_x$. Define $$ \phi : \frac{I_x}{A_x \cap M_x} \to M_x ~~ \textnormal{ such that } ~~\phi(xr+A_x\cap M_x)= x^2r,$$ where $r\in R$. Since $xr\in M$, $x^2r\in M_x$ by Lemma~\ref{NewLemmsMx}. If $xr+A_x\cap M_x = xr'+A_x \cap M_x$ then $xr-xr' \in A_x \cap M_x$. Therefore $x^2(r-r') =0$, which means $x^2r=x^2r'$. Therefore $\phi(xr+A_x\cap M_x) =\phi (xr'+A_x \cap M_x)$. Thus $\phi$ is a well-defined. If $\phi(xr+A_x\cap M_x) =\phi (xr'+A_x \cap M_x)$ then $x^2r = x^2r'$. Therefore $x^2(r-r')=0$, it means $r-r'$ is a zero-divisor of $R$. We have $r-r' \in M$, and so $x(r-r') \in M_x$. Using $x^2(r-r')=0$, we obtain $x(r-r') \in A_x$. So $x(r-r') \in A_x \cap M_x$, it implies $xr+A_x \cap M_x = xr'+A_x \cap M_x$. Thus $\phi$ is injective map. It implies that $|I_x| \leq |M_x||A_x \cap M_x|$. Now the proof follows from $|A_x \cap M_x| \leq |M_x|$.

    \item Assume that $|M_x|>1$. If $x$ is unit element of $R$ then $I_x=R$ and $M_x=M$. Now the proof follows from \cite{ganesan1964properties}. Assume that $x$ is non-unit element of $R$. We will split the proof into two cases.\\
    \textbf{Case 1}: If $x^{2}\neq 0$ then proof follows from Part $(vi)$.
    
    \textbf{Case 2}: If $x^{2} = 0$ then  the product of any two elements of $I_x$ is zero. By Lemma~\ref{order_of_R}, we have $|I_x|=p^n$ for some prime $p$ and $n\in \mathbb{N}$. Using the fact that every maximal subgroup of an abelian group of order $p^n$ has order $p^{n-1}$ and $M_x$ is maximal subgroup of $I_x$, we get $|M_x|=p^{n-1}$. By $|M_x|>1$, we have $n \geq 2$. Thus the proof follows from the inequality $p^n \leq p^{2(n-1)}$ whenever $n \geq 2$.
\end{enumerate}
\end{proof}


\section{Energy of the Cayley graph ${\rm Cay}(R, xR^{*})$} 

In this section, we first calculate the spectrum of the Cayley graph ${\rm Cay}(R,xR^{*})$. Using that, we find the energy of ${\rm Cay}(R,xR^{*})$.

Let $G =(V(G),E(G))$ and $H=(V(H),E(H))$ be two graphs. Then tensor product $G\otimes H $ is the graph with vertex set $V(G)\times V(H)$, and $((u,v),(u',v')) \in E(G\otimes H)$ if and only if $(u,u') \in E(G)$ and $(v,v') \in E(H)$. In the next result, we express a Cayley graph in the tensor products of a Cayley graphs over local rings.

\begin{theorem}\label{R_as_a_product of local rings} Let $R$ be a finite commutative ring and $x$ be a nonzero element of $R$. Then the following statements are true:
\begin{enumerate}[label=(\roman*)]
    \item The Cayley graph ${\rm Cay}(R,xR^{*})$ is $| xR^{*} |$-regular.
    \item Both Cayley graphs ${\rm Cay}(R,xR^{*})$ and ${\rm Cay}(R_{1},x_{1}R_{1}^{*})\otimes \cdots \otimes {\rm Cay}(R_{s},x_{s}R_{s}^{*}) $ are isomorphic.
\end{enumerate}
\end{theorem}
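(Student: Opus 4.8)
The plan is to treat the two parts separately; both are direct verifications from the definitions, using only the ring decomposition and the observations collected in Section~2.

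For part (i), I would argue that the neighbourhood of a vertex $v$ in ${\rm Cay}(R,xR^{*})$ is exactly the translate $v+xR^{*}=\{v+s \colon s\in xR^{*}\}$, which has cardinality $|xR^{*}|$ because translation by $v$ is a bijection of $R$. Two small checks make this legitimate as a statement about the degree. First, the connection set $xR^{*}$ is symmetric: if $s=xu$ with $u\in R^{*}$, then $-s=x(-u)$ with $-u\in R^{*}$, so $-s\in xR^{*}$; hence the edge relation is symmetric and $\mathcal A({\rm Cay}(R,xR^{*}))$ is a genuine symmetric $0$--$1$ matrix. Second, $\bold 0\notin xR^{*}$: if $xu=\bold 0$ for some $u\in R^{*}$, then $x=(xu)u^{-1}=\bold 0$, contradicting $x\neq\bold 0$; so the graph is loop-free and every row sum of the adjacency matrix equals $|xR^{*}|$.

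For part (ii), I would use the canonical bijection $\varphi\colon R\to R_{1}\times\cdots\times R_{s}$, $\varphi(r)=(r_{1},\ldots,r_{s})$, coming from the (ring) decomposition $R\cong R_{1}\times\cdots\times R_{s}$, and show it is a graph isomorphism onto the tensor product. By the remarks in Section~2, $xR^{*}=x_{1}R_{1}^{*}\times\cdots\times x_{s}R_{s}^{*}$, so membership in $xR^{*}$ decouples coordinatewise. Thus, for $u,v\in R$,
\[
(u,v)\in E({\rm Cay}(R,xR^{*}))
\iff u-v\in xR^{*}
\iff u_{i}-v_{i}\in x_{i}R_{i}^{*}\ \text{for all }i,
\]
and the last condition says precisely that $(u_{i},v_{i})\in E({\rm Cay}(R_{i},x_{i}R_{i}^{*}))$ for all $i$, which by the definition of the tensor product is equivalent to $\big((u_{1},\ldots,u_{s}),(v_{1},\ldots,v_{s})\big)\in E\big({\rm Cay}(R_{1},x_{1}R_{1}^{*})\otimes\cdots\otimes{\rm Cay}(R_{s},x_{s}R_{s}^{*})\big)$. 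Hence $\varphi$ preserves adjacency in both directions.

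I do not expect a real obstacle here; the argument is essentially bookkeeping. The only points requiring care are the symmetry and $\bold 0$-freeness of $xR^{*}$ in part (i), and, in part (ii), invoking the exact product description $xR^{*}=\prod_i x_iR_i^{*}$ so that the single membership condition $u-v\in xR^{*}$ matches the conjunction of the $s$ coordinatewise edge conditions defining the tensor product. I would also add a remark that some components $x_{i}$ may be $\bold 0$ (even though $x\neq\bold 0$), in which case $x_{i}R_{i}^{*}=\{\bold 0\}$ and ${\rm Cay}(R_{i},x_{i}R_{i}^{*})$ has a loop at every vertex; this causes no difficulty since the computation above is purely set-theoretic and the factor remains $1$-regular, consistent with $|x_{i}R_{i}^{*}|=1$.
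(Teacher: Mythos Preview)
Your proposal is correct and follows essentially the same approach as the paper: both parts are direct verifications from the definitions, using the product decomposition $xR^{*}=x_{1}R_{1}^{*}\times\cdots\times x_{s}R_{s}^{*}$ to see that adjacency decouples coordinatewise. Your additional checks (symmetry of $xR^{*}$, $\bold 0\notin xR^{*}$, and the remark on coordinates with $x_i=\bold 0$) are not in the paper's proof proper but are welcome clarifications, the last of which the paper itself records immediately after the theorem.
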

\begin{proof}
\begin{enumerate}[label=(\roman*)]
    \item The proof follows from $\sum\limits_{ v \in R } a_{uv}=\sum\limits_{ s \in xR^{*} } a_{u \hspace{0.05cm} u-s}=| xR^{*} |$ for all $u \in V(G)$.
    \item Let  $G={\rm Cay}(R,xR^{*})$ and $G_i = {\rm Cay}(R_{i},x_{i}R_{i}^{*})$ for each $i=1,\ldots,s$. By our assumptions, we have $ R= R_1\times \cdots \times R_s$ and $xR^{*}=x_1R_1^{*}\times \ldots \times x_sR_s^{*}$. Therefore, the vertex set of both graphs $G$ and $G_1 \otimes \cdots \otimes G_s$ are equal. Let $a,b \in R$. Note that $a-b\in xR^{*}$ if and only if $a_i-b_i\in x_iR_i^{*}$ for each $i=1,\ldots , s$. Hence, both graphs $G$ and $G_1 \otimes \cdots \otimes G_s$ are isomorphic.
\end{enumerate} 
\end{proof}

In the last theorem, we assume that  $x$ is a nonzero element of $R$, so the Cayley graph ${\rm Cay}(R,xR^{*})$ has no loops. But, there may be some $1\leq j \leq s$ such that $x_j$ is a zero element of the local ring $R_j$. In that case, the Cayley graph ${\rm Cay}(R_{j},x_{j}R_{j}^{*})$ will have a loop on each vertex and there will be no edges between any two distinct vertices, and so the $(0,1)$-adjacency matrix of ${\rm Cay}(R_{j},x_{j}R_{j}^{*})$ will be identity matrix.  A \textit{component} of a graph $G$ is a connected subgraph of $G$  such that it is not subgraph of any larger connected subgraph of $G$. The next result provides some combinatorial properties of components of ${\rm Cay}(R, xR^{*})$.

\begin{lema}\label{Isomorphic_Component_Partite}
Let $R$ be a local ring and $x$ be a non zero element of $R$. Then the following statements are true:
\begin{enumerate}[label=(\roman*)]
	\item ${\rm Cay}(I_x,xR^{*})$ is a complete $\frac{|I_x|}{|M_x|}$-partite graph whose partite sets are cosets of $\frac{I_x}{M_x}$.
	\item ${\rm Cay}(I_x,xR^{*})$ is isomorphic to $G_{[z]}$ for each $z\in R$, where $V(G_{[z]})=z+I_x$ and $E(G_{[z]})=\{(z+a,z+b)~ \colon ~ a-b\in xR^{*}\}$.
	\item If $a,b \in R$, there is a path between $a$ and $b$ in ${\rm Cay}(R, xR^{*})$ if and only if $a$ and $b$ are lies in same coset of $\frac{R}{I_x}$.
	\item ${\rm Cay}(R, xR^{*})$ has exactly $\frac{|R|}{|I_x|}$ components. 
	\item ${\rm Cay}(I_x, xR^{*})$ is isomorphic to each component of ${\rm Cay}(R, xR^{*})$.
\end{enumerate} 
\end{lema}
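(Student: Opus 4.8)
The plan is to deduce everything from the translation-invariance of the construction together with the structural facts about $I_x$ and $M_x$ already established, handling the parts in the order (ii), (i), (iii), and then (iv) and (v).

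Part (ii) is immediate: I would check that the translation $\tau_z\colon a\mapsto z+a$ is a bijection of $I_x$ onto $z+I_x$ with $(z+a)-(z+b)=a-b$, so it sends edges of ${\rm Cay}(I_x,xR^{*})$ precisely to edges of $G_{[z]}$ and is therefore a graph isomorphism; the case $z=\mathbf 0$ recovers ${\rm Cay}(I_x,xR^{*})$ itself. For part (i), I would use that $M_x$ is a maximal ideal of $I_x$ (Lemma~\ref{PropertiesOfMx}(ii)) and that $I_x=xR^{*}\cup M_x$ is a disjoint union. Then for $a,b\in I_x$ one has $a-b\in I_x$, and $a$ is adjacent to $b$ exactly when $a-b\notin M_x$, i.e. exactly when $a+M_x\neq b+M_x$; hence ${\rm Cay}(I_x,xR^{*})$ is the complete multipartite graph whose parts are the cosets of $M_x$ in $I_x$, the number of parts being $\frac{|I_x|}{|M_x|}=\frac{|R|}{|M|}$ by Lemma~\ref{PropertiesOfMx}(iii); there are no loops since $\mathbf 0\notin xR^{*}$.

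For part (iii), the forward implication is an induction on path length: each edge $\{u,v\}$ satisfies $u-v\in xR^{*}\subseteq I_x$, so consecutive vertices of a path, and hence its endpoints, lie in a common coset of $I_x$. For the converse I would argue that the coset $a+I_x$ even has diameter at most two: given $a\neq b$ with $a-b\in I_x$, if $a-b\in xR^{*}$ then $a\sim b$; otherwise $a-b\in M_x=xM$ by Lemma~\ref{NewLemmsMx}, and choosing $c=a-xu$ for any $u\in R^{*}$ gives $c-a\in xR^{*}$ and $c-b=x(m-u)$, where $a-b=xm$ with $m\in M$. The point is that $m-u$ is a unit: in the local ring $R$, the sum of an element of $M$ and a unit cannot lie in $M$, so $m-u\in R^{*}$ and $c-b\in xR^{*}$; thus $a\sim c\sim b$ is a path (and $c\neq a,b$ since $\mathbf 0\notin xR^{*}$).

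Finally, parts (iv) and (v) follow formally: by (iii) the connected components of ${\rm Cay}(R,xR^{*})$ are exactly the $\frac{|R|}{|I_x|}$ cosets of $I_x$ in $R$, and the subgraph induced on a coset $z+I_x$ is precisely the graph $G_{[z]}$ of part (ii) — any edge of ${\rm Cay}(R,xR^{*})$ with both ends in $z+I_x$ is of the form $\{z+a,z+b\}$ with $a-b\in xR^{*}$, and conversely — which by (ii) is isomorphic to ${\rm Cay}(I_x,xR^{*})$. The only step that requires any thought is the converse of (iii), and there the whole difficulty reduces to the one-line observation that a unit plus an element of the maximal ideal is again a unit in a local ring; the rest is coset bookkeeping.
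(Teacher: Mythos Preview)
Your proof is correct and follows essentially the same approach as the paper's. The only notable difference is in the converse of (iii): the paper deduces connectedness of each coset $z+I_x$ by invoking parts (i) and (ii) (a complete multipartite graph on at least two parts is connected), whereas you give a direct diameter-two argument via the observation that a unit plus an element of the maximal ideal is again a unit; both are equally valid.
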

\begin{proof}
\begin{enumerate}[label=(\roman*)]
        \item Observe that $a$ and $b$ are adjacent in ${\rm Cay}(I_x,xR^{*})$ if and only if $a-b\in xR^{*}$, equivalent to say, $a-b\notin M_x$. Thus $a$ and $b$ are adjacent if and only if they are in different cosets of $\frac{I_x}{M_x}$. The vertex set of ${\rm Cay}(I_x,xR^{*})$ can be partitioned into $\frac{|I_x|}{|M_x|}$ different independent sets and there is an edge between every pair of vertices from different independent sets. This implies ${\rm Cay}(I_x,xR^{*})$ is a complete $\frac{|I_x|}{|M_x|}$-partite graph. 
        \item Define a bijective mapping $\theta \colon V({\rm Cay}(I_{x},xR^{*}))  \rightarrow V(G_{[z]})$ such that $\theta(a)=z+a$ for all $a \in I_x$. It is clear from the definition of $G_{[z]}$ that $a$ and $b$ are adjacent in ${\rm Cay}(I_x, xR^{*})$ if and only if $z+a$ and $z+b$ are adjacent in $G_{[z]}$. Thus $\theta$ is an isomorphism between $G_{[z]}$ and ${\rm Cay}(I_x,xR^{*})$.

	\item Let $a,b \in R$. Assume that there is a path $P$ between $a$ and $b$ in ${\rm Cay}(R, xR^{*})$. Then there exist a sequence of vertex $a_1,\ldots, a_{k-1}$ such that $P= a,a_1,\ldots,a_{k-1},b$. We get $b=a+s_1+\ldots + s_{k}$, for some $s_1, \ldots , s_{k} \in xR^*$. Using $xR^* \subseteq I_x$, we get $b\in a+I_x$. Thus, $a$ and $b$ are lies in same coset of $\frac{R}{I_x}$. Conversly, assume that $a$ and $b$ are lies in same coset of $\frac{R}{I_x}$. We have $a,b \in z+I_x$ for some $z\in R$. Thus $a$ and $b$ are two vertex of the graph $G_{[z]}$, as defined in Part $(ii)$. Using Part $(i)$ and Part $(ii)$, $G_{[z]}$ is a complete $\frac{|I_x|}{|M_x|}$-partite graph, and so $a$ and $b$ are connected by a path in $G_{[z]}$. Hence, there is a path between $a$ and $b$ in ${\rm Cay}(R, xR^{*})$.
	\item Using Part $(iii)$, a subgraph induced by the vertex set $z+I_x$ is a component. Hence, the number of components are equal to the size of $\frac{R}{I_x}$.
	\item Using Part $(iii)$, a subgraph induced by the vertex set $z+I_x$ is a component, and that component is isomorphic to $G_{[z]}$. Now, the proof follows from Part $(ii)$.
\end{enumerate} 
\end{proof}

Let $G$ be a loopless graph. The \textit{compliment} of $G$, denoted by $\overline{G}$, is a loopless graph with the same vertex set as of $G$ and  two distinct vertices are adjacent in $\overline{G}$ if they are not adjacent in $G$. 

\begin{theorem} \cite{west2001introduction,godsil2001algebraic}   \label{specofG_and_Gbar}
		If a graph $G$ be a $k-$regular graph with $n$ vertices, then $G$ and $\overline{G}$ have the same eigenvectors. $k$ and $n-k-1$ are the eigenvalues of $G$ and  $\overline{G}$, respectively, associated to eigenvector $J_{n\times 1}$ whose all entries are $1$. And if $\lambda$ is eigenvalue of $G$ with eigenvector $ \overline{x}\neq J_{n\times 1}$ then $-1-\lambda$ is associated eigenvalue of $\overline{G}$.
\end{theorem}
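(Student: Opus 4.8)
The plan is to work directly with the adjacency matrices and exploit the identity that ties $G$ to $\overline{G}$. Write $\mathcal{A}=\mathcal{A}(G)$ and $\overline{\mathcal{A}}=\mathcal{A}(\overline{G})$, let $I_n$ be the $n\times n$ identity matrix and $J_n$ the $n\times n$ all-ones matrix. The starting point is the observation that, since every pair of distinct vertices is adjacent in exactly one of $G$ and $\overline{G}$ and neither graph has loops, one has
\[
\mathcal{A}+\overline{\mathcal{A}}=J_n-I_n .
\]
From here everything is a short computation, so the ``hard part'' is really just organizing the bookkeeping and invoking the spectral theorem correctly.

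First I would handle the eigenvector $J_{n\times 1}$. Because $G$ is $k$-regular, each row of $\mathcal{A}$ sums to $k$, which is exactly the statement $\mathcal{A}J_{n\times 1}=kJ_{n\times 1}$; hence $k$ is an eigenvalue of $G$ with eigenvector $J_{n\times 1}$. Applying the displayed identity, $\overline{\mathcal{A}}J_{n\times 1}=(J_n-I_n-\mathcal{A})J_{n\times 1}=(n-1-k)J_{n\times 1}$, using $J_nJ_{n\times 1}=nJ_{n\times 1}$. This gives the claimed eigenvalue $n-k-1$ of $\overline{G}$ for the same eigenvector.

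Next I would deal with the remaining eigenvectors. Since $\mathcal{A}$ is a real symmetric matrix, the spectral theorem provides an orthonormal basis $\{\,e_1,\dots,e_n\,\}$ of $\mathbb{R}^n$ consisting of eigenvectors of $\mathcal{A}$; moreover, because $J_{n\times 1}$ is itself an eigenvector, we may take $e_1=\tfrac{1}{\sqrt n}J_{n\times 1}$ and then every other $e_j$ is orthogonal to $J_{n\times 1}$, i.e. $J_n e_j=\mathbf{0}$. If such an $e_j$ satisfies $\mathcal{A}e_j=\lambda e_j$, then the identity gives
\[
\overline{\mathcal{A}}e_j=(J_n-I_n-\mathcal{A})e_j=\mathbf{0}-e_j-\lambda e_j=(-1-\lambda)e_j,
\]
so $e_j$ is an eigenvector of $\overline{G}$ with eigenvalue $-1-\lambda$. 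Thus $G$ and $\overline{G}$ share the full eigenbasis $\{e_1,\dots,e_n\}$, which establishes that they have the same eigenvectors and the stated correspondence of eigenvalues; an arbitrary eigenvector $\overline{x}\neq J_{n\times 1}$ of $G$ can be expressed within the eigenspaces spanned by these $e_j$, and the same relation $\overline{\mathcal{A}}\,\overline{x}=(-1-\lambda)\overline{x}$ follows by linearity on each eigenspace. The only subtlety to spell out carefully is the reduction to the orthonormal eigenbasis with $J_{n\times 1}$ as one of its members, so that orthogonality to $J_{n\times 1}$ (hence $J_n\overline{x}=\mathbf 0$) is available; everything else is immediate from the matrix identity.
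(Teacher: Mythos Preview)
The paper does not prove this theorem; it is quoted from \cite{west2001introduction,godsil2001algebraic} and used as a black box. Your argument is the standard textbook proof and is correct: the identity $\mathcal{A}(G)+\mathcal{A}(\overline{G})=J_n-I_n$ together with the spectral theorem (so that one may choose an orthonormal eigenbasis containing $\tfrac{1}{\sqrt{n}}J_{n\times 1}$, whence every other eigenvector is orthogonal to $J_{n\times 1}$ and killed by $J_n$) immediately yields both claims. The only caveat is that the theorem as stated is slightly loose---``$\overline{x}\neq J_{n\times 1}$'' should really mean ``$\overline{x}$ not a scalar multiple of $J_{n\times 1}$'' (equivalently, $\overline{x}\perp J_{n\times 1}$)---and you have correctly identified that orthogonality to $J_{n\times 1}$ is what is actually needed and used.
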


\begin{lema} \label{SpecofCayIx} Let $R$ be a local ring and $x$ be a non zero element of $R$. Then the spectrum of ${\rm Cay}(I_x,xR^{*})$ is $\begin{pmatrix}
|xR^{*}|&-|M_x|&0\\
1&\frac{|I_x|}{|M_x|}-1&\frac{|I_x|}{|M_x|}(|M_x|-1)
\end{pmatrix}$. 
\end{lema}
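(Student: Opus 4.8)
The plan is to exploit the structure established in Lemma~\ref{Isomorphic_Component_Partite}(i): the graph ${\rm Cay}(I_x,xR^{*})$ is the complete multipartite graph $K_{|M_x|,\ldots,|M_x|}$ with $t:=\frac{|I_x|}{|M_x|}$ parts, each of size $|M_x|$. So the whole problem reduces to computing the spectrum of a balanced complete multipartite graph, which is a standard exercise, and then rewriting the answer in terms of ring-theoretic quantities using Lemma~\ref{PropertiesOfMx}(iii).

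First I would observe that the adjacency matrix $\mathcal{A}$ of $K_{|M_x|,\ldots,|M_x|}$ can be written as $\mathcal{A} = (J_t - I_t)\otimes J_{|M_x|}$, where $J$ denotes an all-ones matrix and $I$ an identity matrix of the indicated size; here the Kronecker product reflects the partition of the vertex set into cosets of $M_x$ inside $I_x$. Since eigenvalues of Kronecker products multiply, it suffices to know the spectra of the two factors. The matrix $J_{|M_x|}$ has eigenvalue $|M_x|$ once and $0$ with multiplicity $|M_x|-1$. The matrix $J_t - I_t$ has eigenvalue $t-1$ once (eigenvector the all-ones vector) and $-1$ with multiplicity $t-1$. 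Multiplying, the eigenvalues of $\mathcal{A}$ are: $(t-1)|M_x|$ with multiplicity $1$; $(-1)\cdot|M_x| = -|M_x|$ with multiplicity $t-1$; and $0$ (coming from every pair where the $J_{|M_x|}$-factor contributes $0$) with multiplicity $1\cdot(|M_x|-1) + (t-1)\cdot|M_x| = |M_x|-1 + (t-1)|M_x| = t(|M_x|-1) = \frac{|I_x|}{|M_x|}(|M_x|-1)$. As a sanity check, the multiplicities sum to $1 + (t-1) + t(|M_x|-1) = t|M_x| = |I_x|$, as they must.

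It then remains to translate $(t-1)|M_x|$ into $|xR^{*}|$. This is immediate: $xR^{*}\cup M_x$ is a disjoint union of $I_x$, so $|xR^{*}| = |I_x| - |M_x| = t|M_x| - |M_x| = (t-1)|M_x|$. (Equivalently, this is just the statement that ${\rm Cay}(I_x,xR^{*})$ is $|xR^{*}|$-regular, which is Theorem~\ref{R_as_a_product of local rings}(i) applied to the local ring $I_x$ with the unit-part generator $x$, noting $I_x^{*}\cdot x = xR^{*}$.) Substituting $t = \frac{|I_x|}{|M_x|}$ everywhere then yields exactly the claimed spectrum matrix.

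I do not anticipate a genuine obstacle here; the only point requiring a little care is justifying the Kronecker-product form of $\mathcal{A}$ and the bookkeeping of which eigenvalue-pairs produce the $0$ eigenvalue, but both are routine once the complete-multipartite description from Lemma~\ref{Isomorphic_Component_Partite}(i) is in hand. An alternative, essentially equivalent route avoids Kronecker products: since ${\rm Cay}(I_x,xR^{*})$ is $|xR^{*}|$-regular with $\overline{{\rm Cay}(I_x,xR^{*})}$ a disjoint union of $t$ copies of $K_{|M_x|}$, one can read off the complement's spectrum and apply Theorem~\ref{specofG_and_Gbar}; this is perhaps the cleanest phrasing if one wants to lean on results already quoted in the paper.
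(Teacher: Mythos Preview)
Your primary approach via the Kronecker decomposition $\mathcal{A} = (J_t - I_t)\otimes J_{|M_x|}$ is correct and more direct than the paper's. The paper instead passes to the complement $\overline{G}$ --- a disjoint union of $t$ copies of $K_{|M_x|}$ --- reads off its spectrum from the block-diagonal form with blocks $J-I$, and then invokes Theorem~\ref{specofG_and_Gbar} to transfer back to $G$; this is precisely the alternative you sketch at the end. Your Kronecker argument computes the spectrum in one step without the complement detour, while the paper's route avoids any tensor-product machinery and relies only on results already quoted. One bookkeeping slip to fix: in your count of the $0$-eigenvalue multiplicity you wrote $1\cdot(|M_x|-1) + (t-1)\cdot|M_x|$, but the second summand should be $(t-1)\cdot(|M_x|-1)$ (the $J_{|M_x|}$-factor has $0$-eigenspace of dimension $|M_x|-1$, paired with either eigenvalue of $J_t-I_t$); the corrected sum $(|M_x|-1)+(t-1)(|M_x|-1)=t(|M_x|-1)$ agrees with your stated final answer, so the argument stands.
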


\begin{proof}
Let $G={\rm Cay}(I_x,xR^{*})$. By Lemma~\ref{Isomorphic_Component_Partite}, $G$ is a complete $\frac{|I_x|}{|M_x|}$- partite graph. Therefore $\overline{G}$ has $\frac{|I_x|}{|M_x|}$ components and each component is a complete graph with $M_x$ vertices. Let $J$ be the $|M_x|\times |M_x|$ matrix with each entry is $1$. Note that the $(0,1)$-adjacency matrix of $\overline{G}$ is a block diagonal matrix with $\frac{|I_x|}{|M_x|}$ blocks and each block matrix is equal to $J-I$, where $I$ is $|M_x|\times |M_x|$ identity matrix. Since the spectrum of $J-I$ is $\begin{pmatrix}
|M_x|-1&-1\\
1&|M_x|-1
\end{pmatrix}$, the spectrum of $\overline{G}$ is $\begin{pmatrix}
|M_x|-1&-1\\
\frac{|I_x|}{|M_x|}& \frac{|I_x|}{|M_x|}(|M_x|-1)
\end{pmatrix}$. Using Theorem \ref{specofG_and_Gbar}, we observe the following things: 
\begin{enumerate}[label=(\roman*)]
\item $G$ has eigenvalue $|I_x|-(|M_x|-1)-1=|I_x|-|M_x|=|xR^{*}|$ with multiplicity $1$ corresponding to the eigenvector $J_{n\times 1}$.
\item $-(|M_x|-1)-1=-|M_x|$ is an eigenvalue with multiplicity $\frac{|I_x|}{|M_x|}-1$.
\item $-(-1)-1=0$ is an eigenvalue with multiplicity $\frac{|I_x|}{|M_x|}(|M_x|-1)$.
\end{enumerate} 
Therefore, the spectrum of ${\rm Cay}(I_x,xR^{*})$ is $\begin{pmatrix}
|xR^{*}|&-|M_x|&0\\
1& \frac{|I_x|}{|M_x|}-1 & \frac{|I_x|}{|M_x|}(|M_x|-1)
\end{pmatrix}$.
\end{proof}.

Using Part $(iv)$ and Part $(v)$ of Lemma~\ref{Isomorphic_Component_Partite},  ${\rm Cay}(R, xR^{*})$ has exactly $\frac{|R|}{|I_x|}$ components and each component is isomorphic to  ${\rm Cay}(I_x,xR^{*})$. Therefore the $(0,1)$-adjacency matrix of ${\rm Cay}(R, xR^{*})$ is a block diagonal matrix with $\frac{|R|}{|I_x|}$ blocks and each block matrix is equal to the $(0,1)$-adjacency matrix of ${\rm Cay}(I_x,xR^{*})$.  Now, we have the following result.

\begin{lema}\label{eigenvalue_corr_to_local_ring}
Let $R$ be a local ring and $x$ be a non-zero element of $R$. Then the spectrum of ${\rm Cay}(R,xR^{*})$ is $\begin{pmatrix}
|xR^{*}| & -|M_x| & 0\\
\frac{|R|}{|I_x|} & \frac{|R| |xR^*|}{|I_x| |M_x|} &\frac{|R|}{|M_x|}(|M_x|-1)
\end{pmatrix}$.
\end{lema}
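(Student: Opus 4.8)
The plan is to assemble the spectrum directly from the block-diagonal structure recorded in the paragraph preceding the statement, together with Lemma~\ref{SpecofCayIx}. Recall that by Parts $(iv)$ and $(v)$ of Lemma~\ref{Isomorphic_Component_Partite}, the adjacency matrix $\mathcal{A}({\rm Cay}(R,xR^{*}))$ is block diagonal with $\frac{|R|}{|I_x|}$ blocks, each block being a copy of $\mathcal{A}({\rm Cay}(I_x,xR^{*}))$. Since the eigenvalues of a block diagonal matrix are the eigenvalues of its blocks with multiplicities added, the distinct eigenvalues of ${\rm Cay}(R,xR^{*})$ are exactly those of ${\rm Cay}(I_x,xR^{*})$, namely $|xR^{*}|$, $-|M_x|$ and $0$, and each of their multiplicities is $\frac{|R|}{|I_x|}$ times the corresponding multiplicity provided by Lemma~\ref{SpecofCayIx}.

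First I would simply write down the three multiplicities obtained this way: $\frac{|R|}{|I_x|}\cdot 1$ for the eigenvalue $|xR^{*}|$, then $\frac{|R|}{|I_x|}\!\left(\frac{|I_x|}{|M_x|}-1\right)$ for $-|M_x|$, and $\frac{|R|}{|I_x|}\cdot\frac{|I_x|}{|M_x|}(|M_x|-1)$ for $0$. The first and the third already coincide with the entries in the claimed table after a trivial cancellation. For the middle entry, the one manipulation needed is to rewrite $\frac{|R|}{|I_x|}\!\left(\frac{|I_x|}{|M_x|}-1\right)=\frac{|R|\,(|I_x|-|M_x|)}{|I_x|\,|M_x|}$ and then invoke the fact, recalled in Section~2, that $xR^{*}\cup M_x$ is a disjoint union of $I_x$, so that $|I_x|-|M_x|=|xR^{*}|$; this turns the expression into $\frac{|R|\,|xR^{*}|}{|I_x|\,|M_x|}$, as required.

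There is essentially no obstacle: all the substantive work was already done in Lemma~\ref{SpecofCayIx} and in the structural observation about the block-diagonal form, so the argument is one line of bookkeeping plus the identity $|I_x|=|xR^{*}|+|M_x|$. As a consistency check I would also verify that the three multiplicities sum to $|R|$, which they do after clearing denominators and again using $|xR^{*}|=|I_x|-|M_x|$; this confirms we have accounted for all $|R|$ eigenvalues and so the table is complete.
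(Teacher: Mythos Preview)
Your proposal is correct and follows exactly the approach the paper takes: the paper's own proof is a single sentence invoking Parts~$(iv)$ and~$(v)$ of Lemma~\ref{Isomorphic_Component_Partite} together with Lemma~\ref{SpecofCayIx}, and you have simply spelled out the block-diagonal bookkeeping and the identity $|I_x|=|xR^{*}|+|M_x|$ that this entails.
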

\begin{proof} The proof follows from Part $(iv)$ and Part $(v)$ of Lemma~\ref{Isomorphic_Component_Partite} and Lemma~\ref{SpecofCayIx}.
\end{proof}

Recall that the energy of a graph $G$ is the sum of absolute values of all the eigenvalues of $G$. The next result is a consequence of Theorem~\ref{eigenvalue_corr_to_local_ring} that provides a formula to calculate the energy of ${\rm Cay}(R,xR^{*})$.

\begin{corollary}\label{energy_of_local_ring}
Let $R$ be a local ring and $x$ be a non-zero element of $R$. Then the energy of ${\rm Cay}(R,xR^{*})$ is $2|xR^{*}|\frac{|R|}{|I_x|}$.
\end{corollary}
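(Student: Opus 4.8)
The plan is to read off the energy directly from the spectrum supplied by Lemma~\ref{eigenvalue_corr_to_local_ring}, since energy is by definition $\sum_i m_i|\lambda_i|$ over the distinct eigenvalues $\lambda_i$ with multiplicities $m_i$. So the whole argument is a one-line substitution followed by an algebraic simplification; there is no real obstacle here, the only thing to notice is that the factor $|M_x|$ cancels.

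Concretely, I would first invoke Lemma~\ref{eigenvalue_corr_to_local_ring} to record that ${\rm Cay}(R,xR^{*})$ has exactly three distinct eigenvalues, namely $|xR^{*}|$ with multiplicity $\frac{|R|}{|I_x|}$, then $-|M_x|$ with multiplicity $\frac{|R|\,|xR^{*}|}{|I_x|\,|M_x|}$, and finally $0$ with multiplicity $\frac{|R|}{|M_x|}(|M_x|-1)$. The zero eigenvalue contributes nothing to the energy, so
\[
\mathcal{E}\bigl({\rm Cay}(R,xR^{*})\bigr)
= |xR^{*}|\cdot\frac{|R|}{|I_x|} \;+\; |M_x|\cdot\frac{|R|\,|xR^{*}|}{|I_x|\,|M_x|}.
\]

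Then I would simplify the second summand: the $|M_x|$ in the numerator cancels the $|M_x|$ in the denominator, leaving $\frac{|R|\,|xR^{*}|}{|I_x|}$, which is exactly the first summand. Adding the two equal terms gives $2|xR^{*}|\frac{|R|}{|I_x|}$, as claimed. (If one wishes to be scrupulous, note that all three quantities $\frac{|R|}{|I_x|}$, $\frac{|R||xR^*|}{|I_x||M_x|}$, $\frac{|R|}{|M_x|}(|M_x|-1)$ are genuine nonnegative integers — they sum to $|R|$, the number of vertices — so the expression above is a legitimate evaluation of the energy formula.) This completes the proof; the step most worth double-checking is simply that the spectrum from Lemma~\ref{eigenvalue_corr_to_local_ring} has been transcribed correctly, since everything else is immediate.
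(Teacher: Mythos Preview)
Your proof is correct and is essentially identical to the paper's own proof, which is the one-line computation $|xR^{*}|\frac{|R|}{|I_x|}+|M_x|\frac{|xR^{*}||R|}{|M_x||I_x|}=2|xR^{*}|\frac{|R|}{|I_x|}$ read off from Lemma~\ref{eigenvalue_corr_to_local_ring}. There is nothing to add.
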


\begin{proof}
The proof follows from $|xR^{*}|\frac{|R|}{|I_x|}+|M_x|\frac{|xR^{*}||R|}{|M_x||I_x|}=2|xR^{*}|\frac{|R|}{|I_x|}$.
\end{proof}

\begin{theorem}\cite{west2001introduction}\label{Eigenvalues_of_tensor_product_of_graphs}
Let $G$ and $H$ be two graphs. If $\lambda_1, \ldots, \lambda_n$ are the eigenvalues of $G$ and $\mu_1,\ldots,\mu_m $ are the eigenvalues of $H$ then $\lambda_{i}\mu_{j}$,where $1\leq i\leq n$ and $1\leq j \leq m$ are the eigenvalues of $G\otimes H$. 
\end{theorem}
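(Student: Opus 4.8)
The plan is to realize the adjacency matrix of the tensor product as a Kronecker (tensor) product of matrices and then produce an explicit eigenbasis from eigenbases of the two factors.

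First I would observe that, after ordering the vertex set $V(G)\times V(H)$ lexicographically, the $(0,1)$-adjacency matrix of $G\otimes H$ is exactly the Kronecker product $\mathcal{A}(G)\otimes\mathcal{A}(H)$: by the definition of the tensor product of graphs, the entry indexed by the pair $((u,v),(u',v'))$ equals $1$ precisely when $(u,u')\in E(G)$ and $(v,v')\in E(H)$, i.e.\ it equals $a_{uu'}b_{vv'}$, where $[a_{uu'}]=\mathcal{A}(G)$ and $[b_{vv'}]=\mathcal{A}(H)$. This identification holds verbatim even when $G$ or $H$ carries loops, since loops only change diagonal entries and the product formula $a_{uu'}b_{vv'}$ is insensitive to that.

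Next I would use the mixed-product property of the Kronecker product: for vectors $\mathbf{x}\in\mathbb{R}^{n}$ and $\mathbf{y}\in\mathbb{R}^{m}$,
\[
(\mathcal{A}(G)\otimes\mathcal{A}(H))(\mathbf{x}\otimes\mathbf{y}) = (\mathcal{A}(G)\mathbf{x})\otimes(\mathcal{A}(H)\mathbf{y}).
\]
Hence if $\mathcal{A}(G)\mathbf{v}_i=\lambda_i\mathbf{v}_i$ and $\mathcal{A}(H)\mathbf{w}_j=\mu_j\mathbf{w}_j$ with $\mathbf{v}_i,\mathbf{w}_j$ nonzero, then $\mathbf{v}_i\otimes\mathbf{w}_j$ is a nonzero eigenvector of $\mathcal{A}(G\otimes H)$ with eigenvalue $\lambda_i\mu_j$, which already shows each product $\lambda_i\mu_j$ occurs in the spectrum.

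Finally, to confirm that these are all the eigenvalues with the correct multiplicities, I would use that $\mathcal{A}(G)$ and $\mathcal{A}(H)$ are real symmetric, so $\mathbb{R}^n$ and $\mathbb{R}^m$ admit orthonormal eigenbases $\{\mathbf{v}_1,\dots,\mathbf{v}_n\}$ and $\{\mathbf{w}_1,\dots,\mathbf{w}_m\}$. The $nm$ vectors $\mathbf{v}_i\otimes\mathbf{w}_j$ are then orthonormal, because $\langle \mathbf{v}_i\otimes\mathbf{w}_j,\ \mathbf{v}_{i'}\otimes\mathbf{w}_{j'}\rangle = \langle\mathbf{v}_i,\mathbf{v}_{i'}\rangle\,\langle\mathbf{w}_j,\mathbf{w}_{j'}\rangle$, so they form a basis of $\mathbb{R}^{nm}$ consisting of eigenvectors of $\mathcal{A}(G\otimes H)$; the eigenvalue list with multiplicities is thus precisely the multiset $\{\lambda_i\mu_j : 1\le i\le n,\ 1\le j\le m\}$. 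There is no genuine obstacle in this argument; the only point deserving a little care is the bookkeeping of multiplicities when several products $\lambda_i\mu_j$ happen to coincide, and this is handled automatically by working with a fixed orthonormal eigenbasis rather than with the list of distinct eigenvalues.
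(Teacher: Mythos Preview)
Your argument is correct and is the standard proof: identify $\mathcal{A}(G\otimes H)$ with the Kronecker product $\mathcal{A}(G)\otimes\mathcal{A}(H)$, then tensor eigenbases using the mixed-product rule and the symmetry of the adjacency matrices. The paper does not give its own proof of this statement; it is quoted as a known result from \cite{west2001introduction}, so there is nothing to compare against beyond noting that your approach is exactly the textbook one.
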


Let $R$ be a finite commutative ring, $x$ be a non-zero element of $R$, and $P=\{ i \colon  i \in \{ 1, \ldots, s \} \text{ and } x_i \neq \bold{0}\}$. By Theorem \ref{R_as_a_product of local rings}, both graphs ${\rm Cay}(R,xR^{*})$ and $\otimes_{i=1}^{s} {\rm Cay}(R_{i},x_{i}R_{i}^{*})$ are isomorphic, and so they have same set of eigenvalues with same multiplicity. Note that if $i \not \in P$ then the $(0,1)$-adjacency matrix of ${\rm Cay}(R_{i},x_{i}R_{i}^{*})$ is square identity matrix of size $|R_i|$. Therefore, the $(0,1)$-adjacency matrix of $\otimes_{i\in P^c} {\rm Cay}(R_{i},x_{i}R_{i}^{*})$ is square identity matrix of size $\prod\limits_{i\in P^{c}}|R_i|$. By Theorem \ref{Eigenvalues_of_tensor_product_of_graphs}, both graphs ${\rm Cay}(R,xR^{*})$ and $\otimes_{i\in P} {\rm Cay}(R_{i},x_{i}R_{i}^{*})$ have same set of eigenvalues, but the multiplicity of each eigenvalue in ${\rm Cay}(R,xR^{*})$ is equal to  $\prod\limits_{i\in P^{c}}|R_i|$ times the multiplicity of corresponding eigenvalue in $\otimes_{i\in P} {\rm Cay}(R_{i},x_{i}R_{i}^{*})$. Observe that $|I_{x_i}| = |M_{x_i}|+|x_iR_i^{*}|$ for each $x_i\neq \bold{0}$. Therefore,
\begin{equation} \label{card_of_xR*}
|xR^{*}| = \prod_{\substack{i=1 \\  x_i \neq \bold{0}}}^s |x_iR_{i}^{*}| = \prod_{\substack{i=1 \\  x_i \neq \bold{0}}}^{s}(|I_{x_i}|-|M_{x_i}|)=\prod_{\substack{i=1 \\  x_i \neq \bold{0}}}^{s}|M_{x_i}|\prod_{\substack{i=1 \\  x_i \neq \bold{0}}}^{s}\bigg(\frac{|I_{x_i}|}{|M_{x_i}|}-1\bigg).
\end{equation}

\noindent The result appears as follows.

\begin{theorem}\label{eigenvalue_of_cay(R,xR)} Let $R$ be a finite commutative ring, $x$ be a non-zero element of $R$, and $P=\{ i \colon  i \in \{ 1, \ldots, s \} \text{ and } x_i \neq \bold{0}\}$. Then eigenvalues of ${\rm Cay}(R,xR^{*})$ are
\begin{enumerate}[label =(\roman*)]
\item $(-1)^{|C|}\frac{|xR^{*}|}{\prod\limits_{i\in C}\frac{|x_iR_i^{*}|}{|M_{x_i}|}}$ with multiplicity $\frac{|R|}{|I_x|} \prod\limits_{i\in C}\frac{|x_iR_{i}^{*}|}{|M_{x_i}|}$ for all subset $C$ of $P$, where $P^{c}=\{1,2,\cdots,s\}\setminus P$.
\item 0 with multiplicity $|R|- \frac{|R|}{|I_x|} \prod\limits_{i\in P}\left(1+\frac{|x_iR_i^{*}|}{|M_{x_i}|}\right)$.
\end{enumerate}
\end{theorem}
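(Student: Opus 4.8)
The plan is to combine the tensor-product description in Theorem~\ref{R_as_a_product of local rings} with the single-local-ring spectrum of Lemma~\ref{eigenvalue_corr_to_local_ring} and the tensor-product eigenvalue formula of Theorem~\ref{Eigenvalues_of_tensor_product_of_graphs}. First I would record the spectra of the $s$ factors. For $i\in P^{c}$ the $(0,1)$-adjacency matrix of ${\rm Cay}(R_i,x_iR_i^{*})$ is the $|R_i|\times|R_i|$ identity, so its only eigenvalue is $1$, with multiplicity $|R_i|$. For $i\in P$, Lemma~\ref{eigenvalue_corr_to_local_ring} gives the eigenvalues $|x_iR_i^{*}|$, $-|M_{x_i}|$, $0$ with multiplicities $\frac{|R_i|}{|I_{x_i}|}$, $\frac{|R_i||x_iR_i^{*}|}{|I_{x_i}||M_{x_i}|}$, $\frac{|R_i|}{|M_{x_i}|}(|M_{x_i}|-1)$, respectively. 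Since ${\rm Cay}(R,xR^{*})\cong{\rm Cay}(R_1,x_1R_1^{*})\otimes\cdots\otimes{\rm Cay}(R_s,x_sR_s^{*})$, Theorem~\ref{Eigenvalues_of_tensor_product_of_graphs} shows that, counted with multiplicity, the eigenvalues of ${\rm Cay}(R,xR^{*})$ are precisely the products $\prod_{i=1}^{s}\lambda_i$ with $\lambda_i$ an eigenvalue of the $i$-th factor counted with multiplicity; grouping selections that pick the same eigenvalue of each factor, a choice of one eigenvalue $\lambda_i$ from each factor occurs with multiplicity equal to the product of the corresponding multiplicities.

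Next I would separate the nonzero products from the zero ones. Each factor indexed by $P^{c}$ always contributes $1$, so such a product is nonzero exactly when for every $i\in P$ we pick $|x_iR_i^{*}|$ or $-|M_{x_i}|$ (never $0$). Encoding a choice by the set $C\subseteq P$ of those $i$ at which $-|M_{x_i}|$ is selected, the resulting eigenvalue is
$$(-1)^{|C|}\prod_{i\in C}|M_{x_i}|\ \prod_{i\in P\setminus C}|x_iR_i^{*}|,$$
and it occurs with multiplicity
$$\prod_{i\in P^{c}}|R_i|\ \prod_{i\in C}\frac{|R_i||x_iR_i^{*}|}{|I_{x_i}||M_{x_i}|}\ \prod_{i\in P\setminus C}\frac{|R_i|}{|I_{x_i}|}.$$
Now I rewrite both expressions in the stated form. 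Using $|xR^{*}|=\prod_{i\in P}|x_iR_i^{*}|$ from \eqref{card_of_xR*}, dividing this by $\prod_{i\in C}\frac{|x_iR_i^{*}|}{|M_{x_i}|}$ gives exactly $\prod_{i\in C}|M_{x_i}|\prod_{i\in P\setminus C}|x_iR_i^{*}|$, i.e.\ the value in (i). Since $|I_{x_i}|=1$ for $i\in P^{c}$, we have $\frac{|R|}{|I_x|}=\prod_{i\in P^{c}}|R_i|\prod_{i\in P}\frac{|R_i|}{|I_{x_i}|}$, so multiplying this by $\prod_{i\in C}\frac{|x_iR_i^{*}|}{|M_{x_i}|}$ reproduces the displayed multiplicity; this proves (i).

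Finally, every remaining eigenvalue equals $0$, so the multiplicity of $0$ is $|R|$ minus the total contribution of all $C\subseteq P$, which factors as
$$\prod_{i\in P^{c}}|R_i|\ \prod_{i\in P}\Big(\frac{|R_i|}{|I_{x_i}|}+\frac{|R_i||x_iR_i^{*}|}{|I_{x_i}||M_{x_i}|}\Big)=\frac{|R|}{|I_x|}\prod_{i\in P}\Big(1+\frac{|x_iR_i^{*}|}{|M_{x_i}|}\Big),$$
giving (ii). The step requiring the most care is purely the bookkeeping: the multiplicity stated in (i) is attached to the contribution of a single set $C$, and since distinct $C$ may yield numerically equal eigenvalues, the true multiplicity of such a value is the sum of the listed multiplicities over all $C$ producing it; one must also keep track that the $P^{c}$-factors are identities, so they only scale every multiplicity by $\prod_{i\in P^{c}}|R_i|$. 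All the algebraic simplifications are immediate from $|I_{x_i}|=|M_{x_i}|+|x_iR_i^{*}|$ together with $|I_{x_i}|=1$ for $i\in P^{c}$.
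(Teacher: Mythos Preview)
Your proof is correct and follows essentially the same approach as the paper: both use the tensor-product decomposition ${\rm Cay}(R,xR^{*})\cong\bigotimes_{i=1}^{s}{\rm Cay}(R_i,x_iR_i^{*})$, apply Lemma~\ref{eigenvalue_corr_to_local_ring} to each local factor with $i\in P$ (and note the $P^{c}$-factors contribute identities), index the nonzero eigenvalues by subsets $C\subseteq P$ recording where $-|M_{x_i}|$ is chosen, and obtain the multiplicity of $0$ by subtracting the total count of nonzero eigenvalues from $|R|$. Your additional remark that distinct $C$ may yield numerically equal eigenvalues is a welcome clarification the paper leaves implicit.
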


\begin{proof} 
\begin{enumerate}[label=(\roman*)]
\item Let $C$ be a non-empty subset of $P$. By Theorem \ref{Eigenvalues_of_tensor_product_of_graphs}, $(-1)^{|C|}\prod\limits_{i\in C}|M_{x_i}| \prod\limits_{i\in P\setminus C}|x_iR_i^{*}|$ is a non-zero eigenvalue of ${\rm Cay}(R,xR^{*})$ with multiplicity $\prod\limits_{i\in C}\frac{|R_i| |x_iR_i^{*}|}{|I_{x_i}| |M_{x_i}|}\prod\limits_{i\in P\setminus C}\frac{|R_i|}{|I_{x_i}|}\prod\limits_{i\in P^{c}}|R_i|$. We have $$(-1)^{|C|}\prod\limits_{i\in C}|M_{x_i}| \prod\limits_{i\in P\setminus C}|x_iR_i^{*}|  =   (-1)^{|C|}\frac{|xR^{*}|}{\prod\limits_{i\in C}\frac{|x_iR_i^{*}|}{|M_{x_i}|}}$$ and 
\begin{align}
\prod\limits_{i\in C}\frac{|R_i| |x_iR_i^{*}|}{|I_{x_i}| |M_{x_i}|}\prod\limits_{i\in P\setminus C}\frac{|R_i|}{|I_{x_i}|}\prod\limits_{i\in P^{c}}|R_i|   &=          \prod\limits_{i\in P^{c}}|R_i|\prod\limits_{i\in P}\frac{|R_i|}{|I_{x_i}|}\prod\limits_{i\in C}\frac{|x_iR_{i}^{*}|}{|M_{x_i}|} \nonumber \\
&= \frac{|R|}{|I_x|} \prod\limits_{i\in C}\frac{|x_iR_{i}^{*}|}{|M_{x_i}|}. \nonumber
\end{align}
In particular, if $C=\phi$ then $\prod\limits_{i\in P} |x_{i}R_{i}^{*}| $ is an eigenvalue with multiplicity $\prod\limits_{i\in P^{c}}|R_i|\prod\limits_{i\in P}\frac{|R_i|}{|I_{x_i}|}$. Again, we have
\begin{align}
\prod\limits_{i\in P} |x_{i}R_{i}^{*}|  = |xR^*| \textnormal{ and } \prod\limits_{i\in P^{c}}|R_i|\prod\limits_{i\in P}\frac{|R_i|}{|I_{x_i}|} = \frac{|R|}{|I_x|}. \nonumber
\end{align}

\item  By Part $(i)$, the number of non-zero eigenvalues of ${\rm Cay}(R,xR^{*})$ is $$\frac{|R|}{|I_x|}       +\sum\limits_{C\subset P,C\neq \phi}\left( \frac{|R|}{|I_x|} \prod\limits_{i\in C}\frac{|x_iR_{i}^{*}|}{|M_{x_i}|} \right).$$ 

We have  $$ \frac{|R|}{|I_x|} \left(1+\sum\limits_{C\subset P, C\neq \phi}\prod\limits_{i\in C } \frac{|x_{i}R_{i}^{*}|}{|M_{x_i}|}\right)= \frac{|R|}{|I_x|} \prod\limits_{i\in P}\left(1+\frac{|x_{i}R_{i}^{*}|}{|M_{x_i}|}\right).$$ Therefore, $0$ is an eigenvalue with multiplicity $|R|-\frac{|R|}{|I_x|} \prod\limits_{i\in P}\left(1+\frac{|x_{i}R_{i}^{*}|}{|M_{x_i}|}\right)$.
\end{enumerate}
\end{proof}

In Theorem \ref{Eigenvalues_of_tensor_product_of_graphs}, we have $\mathcal{E}(G\otimes H) = \mathcal{E}(G)\mathcal{E}(H)$. Now, we have the last result of this section.

\begin{theorem} \label{energy_for_arb_R}
Let $R$ be a finite commutative ring, $x$ be a non-zero element of $R$, and $P=\{ i \colon  i \in \{ 1, \ldots, s \} \text{ and } x_i \neq \bold{0}\}$. Then the energy of ${\rm Cay}(R,xR^{*})$ is $2^{|P|}\frac{|R||xR^{*}|}{|I_x|}$.
\end{theorem}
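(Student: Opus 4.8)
The plan is to reduce the computation to the local case through the tensor-product decomposition of Theorem~\ref{R_as_a_product of local rings}. By part (ii) of that theorem, ${\rm Cay}(R,xR^{*})\cong {\rm Cay}(R_{1},x_{1}R_{1}^{*})\otimes\cdots\otimes{\rm Cay}(R_{s},x_{s}R_{s}^{*})$, and by Theorem~\ref{Eigenvalues_of_tensor_product_of_graphs} the eigenvalues of a tensor product are the pairwise products of the eigenvalues of the factors, with multiplicities multiplying. Consequently $\sum_{i,j}|\lambda_i\mu_j| = \big(\sum_i|\lambda_i|\big)\big(\sum_j|\mu_j|\big)$, i.e. energy is multiplicative under $\otimes$, so
$$\mathcal{E}({\rm Cay}(R,xR^{*})) \;=\; \prod_{i=1}^{s}\mathcal{E}\big({\rm Cay}(R_{i},x_{i}R_{i}^{*})\big).$$
It therefore suffices to evaluate each local factor and then multiply.

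For $i\in P$, the element $x_i$ is a nonzero element of the local ring $R_i$, so Corollary~\ref{energy_of_local_ring} applies and gives $\mathcal{E}({\rm Cay}(R_{i},x_{i}R_{i}^{*})) = 2\,|x_iR_i^{*}|\,\frac{|R_i|}{|I_{x_i}|}$. For $i\notin P$, we have $x_i=\bold{0}$, so ${\rm Cay}(R_{i},x_{i}R_{i}^{*})$ carries a loop at every vertex and no other edge; its $(0,1)$-adjacency matrix is the identity of size $|R_i|$, hence its energy is $|R_i|$. Since in this case $x_iR_i^{*}=\{\bold{0}\}$ and $I_{x_i}=\{\bold{0}\}$, the local energy can be written uniformly as $2^{\varepsilon_i}\,|x_iR_i^{*}|\,\frac{|R_i|}{|I_{x_i}|}$ with $\varepsilon_i=1$ for $i\in P$ and $\varepsilon_i=0$ for $i\notin P$.

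Multiplying these formulas over $i=1,\ldots,s$ and grouping the three kinds of factors separately, I would then invoke $|R|=\prod_{i=1}^{s}|R_i|$, the identity $|xR^{*}|=\prod_{i=1}^{s}|x_iR_i^{*}|$ recorded in Section~2, and $|I_x|=\prod_{i=1}^{s}|I_{x_i}|$ — which holds because $I_x=Rx=R_1x_1\times\cdots\times R_sx_s=I_{x_1}\times\cdots\times I_{x_s}$ — to obtain
$$\mathcal{E}({\rm Cay}(R,xR^{*})) \;=\; 2^{\sum_{i=1}^{s}\varepsilon_i}\cdot\frac{\big(\prod_{i=1}^{s}|x_iR_i^{*}|\big)\big(\prod_{i=1}^{s}|R_i|\big)}{\prod_{i=1}^{s}|I_{x_i}|} \;=\; 2^{|P|}\,\frac{|R|\,|xR^{*}|}{|I_x|},$$
which is the claimed identity. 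There is no serious obstacle here; the two points that need a little care are (a) the multiplicativity of energy under $\otimes$, which is not literally Theorem~\ref{Eigenvalues_of_tensor_product_of_graphs} but follows from it via the factorization of the sum of absolute values, and (b) the indices $i\notin P$: there ${\rm Cay}(R_{i},x_{i}R_{i}^{*})$ is a disjoint union of loops rather than a loopless Cayley graph, so Corollary~\ref{energy_of_local_ring} does not apply directly and one computes the energy $|R_i|$ by hand, the point being that this equals $2^{0}|x_iR_i^{*}|\,|R_i|/|I_{x_i}|$ so that the product collapses into the stated closed form. Note also that the hypothesis $x\neq\bold{0}$ forces $P\neq\emptyset$, so $|P|\geq 1$.
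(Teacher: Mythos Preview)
Your proof is correct and follows essentially the same route as the paper: decompose ${\rm Cay}(R,xR^{*})$ as a tensor product via Theorem~\ref{R_as_a_product of local rings}, use that energy is multiplicative under $\otimes$ (stated by the paper just before this theorem as a consequence of Theorem~\ref{Eigenvalues_of_tensor_product_of_graphs}), apply Corollary~\ref{energy_of_local_ring} for $i\in P$ and the identity-matrix observation for $i\notin P$, then collect factors using $|R|=\prod_i|R_i|$, $|xR^{*}|=\prod_i|x_iR_i^{*}|$, and $|I_x|=\prod_i|I_{x_i}|$. Your uniform expression $2^{\varepsilon_i}|x_iR_i^{*}|\,|R_i|/|I_{x_i}|$ is a tidy notational device, but the argument is otherwise identical to the paper's.
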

\begin{proof} We have $$\mathcal{E} ( {\rm Cay}(R,xR^{*})) = \prod_{i=1}^{s} \mathcal{E} ( {\rm Cay}(R_{i},x_{i}R_{i}^{*})).$$ Since the $(0,1)$-adjacency matrix of $\otimes_{i\in P^c} {\rm Cay}(R_{i},x_{i}R_{i}^{*})$ is square identity matrix of size $\prod\limits_{i\in P^{c}}|R_i|$, we have $\prod\limits_{i\in P^c} \mathcal{E} ({\rm Cay}(R_{i},x_{i}R_{i}^{*})) = \prod\limits_{i\in P^{c}}|R_i|$. Therefore, 
\begin{align}
\mathcal{E} ({\rm Cay}(R,xR^{*})) &= \prod_{i\in P} \mathcal{E} ( {\rm Cay}(R_{i},x_{i}R_{i}^{*})) \times \prod_{i\in P^c} \mathcal{E} ( {\rm Cay}(R_{i},x_{i}R_{i}^{*})) \nonumber \\
&= \prod_{i\in P}  2|x_iR_i^{*}|\frac{|R_i|}{|I_{x_i}|} \times \prod\limits_{i\in P^{c}}|R_i| \nonumber \\
&= 2^{|P|}\frac{|R||xR^{*}|}{|I_x|}. \nonumber
\end{align}
In the third equality, we use $|R|=\prod\limits_{i=1}^s |R_i|$, $|xR^*|=\prod\limits_{i\in P} |x_iR_i^*|$, and $|I_x|= \prod\limits_{i\in P}|I_{x_i}|$.
\end{proof}



\section{Energy of the complement graph of ${\rm Cay}   (R,xR^{*})$}

In this section we compute the energy of the compliment  graph of ${\rm Cay}   (R,xR^{*})$.

\begin{theorem}
Let $R$ be a finite commutative ring and $x$ be a non-zero element of $R$. Then energy of $\overline{{\rm Cay}(R,xR^{*})}$ is $$2(|R|-|xR^{*}|-1)+   \frac{|R|}{|I_x|} \left[ 2^{|P|}|xR^*| +  \prod\limits_{i\in P} \left( 1-\frac{|x_iR_{i}^{*}|}{|M_{x_i}|}\right) - \prod_{i\in P}\left(1+\frac{|x_iR_i^{*}|}{|M_{x_i}|}\right)  \right] ,$$ where $P=\{ i \colon  i \in \{ 1, \ldots, s \} \text{ and } x_i \neq \bold{0} \}$.
\end{theorem}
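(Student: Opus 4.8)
The plan is to read off the spectrum of $\overline{{\rm Cay}(R,xR^{*})}$ from Theorem~\ref{specofG_and_Gbar}, feeding in the explicit eigenvalues of ${\rm Cay}(R,xR^{*})$ given by Theorem~\ref{eigenvalue_of_cay(R,xR)}, and then collapse the resulting sum of absolute values using the product expansion $\prod_{i\in P}(1-d_i)=\sum_{C\subseteq P}(-1)^{|C|}\prod_{i\in C}d_i$. Write $G={\rm Cay}(R,xR^{*})$, $n=|R|$, $k=|xR^{*}|$; by Theorem~\ref{R_as_a_product of local rings}(i), $G$ is $k$-regular, and $1\le k\le n-1$ since $xR^{*}$ contains no zero element. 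Abbreviate $N:=\tfrac{|R|}{|I_x|}$ and, for $i\in P$, $d_i:=\tfrac{|x_iR_i^{*}|}{|M_{x_i}|}$. By Theorem~\ref{eigenvalue_of_cay(R,xR)}, the eigenvalues of $G$ with multiplicity are $\lambda_C:=(-1)^{|C|}\tfrac{k}{\prod_{i\in C}d_i}$ with multiplicity $\mu_C:=N\prod_{i\in C}d_i$ for each $C\subseteq P$, together with $0$ with multiplicity $m_0:=|R|-N\prod_{i\in P}(1+d_i)$. Note $\lambda_\emptyset=k$, $\mu_\emptyset=N$, and the all-ones vector $J$ is one of those $N$ eigenvectors for $k$.

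By Theorem~\ref{specofG_and_Gbar}, in an eigenbasis of $G$ containing $J$, the vector $J$ contributes the eigenvalue $n-k-1=|R|-|xR^{*}|-1\ge 0$ to $\overline{G}$, while every other basis eigenvector of $G$-eigenvalue $\lambda$ contributes $-1-\lambda$. Summing absolute values,
\[
\mathcal{E}(\overline{G})=\bigl(|R|-|xR^{*}|-1\bigr)-|{-1-k}|+m_0\cdot|{-1-0}|+\sum_{C\subseteq P}\mu_C\,|{-1-\lambda_C}|,
\]
where the $-|{-1-k}|=-(1+k)$ term compensates for the single $J$-copy, which has been pulled out of the sum and replaced by $n-k-1$.

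The main computational point is the identity $|{-1-\lambda_C}|=|\lambda_C|+(-1)^{|C|}$ for every $C\subseteq P$: if $|C|$ is even then $\lambda_C\ge 0$ and the left side is $1+\lambda_C$; if $|C|$ is odd then $\lambda_C=-|\lambda_C|$, and since $|\lambda_C|=\prod_{i\in C}|M_{x_i}|\prod_{i\in P\setminus C}|x_iR_i^{*}|$ is a product of positive integers, hence $\ge 1$, the left side is $|\lambda_C|-1$. Consequently $\mu_C|{-1-\lambda_C}|=Nk+N(-1)^{|C|}\prod_{i\in C}d_i$, and
\[
\sum_{C\subseteq P}\mu_C\,|{-1-\lambda_C}|=2^{|P|}Nk+N\sum_{C\subseteq P}(-1)^{|C|}\prod_{i\in C}d_i=2^{|P|}Nk+N\prod_{i\in P}(1-d_i).
\]
Substituting this and $m_0=|R|-N\prod_{i\in P}(1+d_i)$ into the displayed expression for $\mathcal{E}(\overline{G})$ and collecting terms (no $N$ or $Nk$ cross-terms survive) yields
\[
\mathcal{E}(\overline{G})=2\bigl(|R|-|xR^{*}|-1\bigr)+N\Bigl[2^{|P|}|xR^{*}|+\prod_{i\in P}(1-d_i)-\prod_{i\in P}(1+d_i)\Bigr],
\]
which is the asserted formula after substituting back $N=\tfrac{|R|}{|I_x|}$ and $d_i=\tfrac{|x_iR_i^{*}|}{|M_{x_i}|}$.

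The only steps needing care are the multiplicity bookkeeping at the regularity eigenvalue $k$ — it occurs with multiplicity $N=|R|/|I_x|$, of which only the $J$-direction flips to $n-k-1$ while the remaining $N-1$ copies flip to $-1-k$, so one must not double count — and the sign check $|\lambda_C|\ge 1$ that licenses dropping the absolute value bars when $|C|$ is odd; both are resolved by the decomposition $R=R_1\times\cdots\times R_s$ and the integrality of $|M_{x_i}|$ and $|x_iR_i^{*}|$. Everything else is the product identity above together with routine simplification.
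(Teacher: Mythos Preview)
Your proof is correct and follows essentially the same route as the paper: pass to the complement via Theorem~\ref{specofG_and_Gbar}, plug in the eigenvalues from Theorem~\ref{eigenvalue_of_cay(R,xR)}, use $|\lambda_C|\ge 1$ (a product of positive integers) to resolve the absolute values, and collapse with the identity $\sum_{C\subseteq P}(-1)^{|C|}\prod_{i\in C}d_i=\prod_{i\in P}(1-d_i)$. The only difference is bookkeeping---you include $C=\emptyset$ in the main sum and subtract off the single $J$-copy, whereas the paper treats that term separately with multiplicity $N-1$---and you make the sign justification $|\lambda_C|\ge 1$ explicit where the paper leaves it implicit.
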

\begin{proof}
Let $\lambda_1, \lambda_2,\ldots,\lambda_{|R|}$ be the eigenvlaues of ${\rm Cay}(R,xR^{*})$ and $\lambda_1=|xR^{*}|$. Theorem~\ref{specofG_and_Gbar} implies that $|R|-|xR^{*}|-1,-\lambda_2-1,\ldots,-\lambda_{|R|}-1$ are eigenvalues of ${\rm Cay}(R,xR^{*})$. We have
\begin{align}
\mathcal{E}(\overline{{\rm Cay}(R,xR^{*}})&=|R|-|xR^{*}|-1+\sum_{i\neq 1}|-1-\lambda_i| \nonumber \\
&=|R|-|xR^{*}|-1+\sum_{i\neq 1}|\lambda_i+1| \nonumber \\
&=|R|-|xR^{*}|-1+\sum_{i\neq 1,\lambda_i\neq 0}|\lambda_i+1|+\sum_{i\neq 1,\lambda_i= 0}|\lambda_i+1|. \label{Eq11-Energy_Compliment}
\end{align}
By Theorem \ref{eigenvalue_of_cay(R,xR)}, we have 
\begin{align}
\sum_{i\neq 1,\lambda_i= 0}|\lambda_i+1| =|R|- \frac{|R|}{|I_x|} \prod_{i\in P}\left(1+\frac{|x_iR_i^{*}|}{|M_{x_i}|}\right). \label{Eq1-Energy_Compliment}
\end{align}

Let $C$ be a non-empty subset of $P$. Using Theorem~\ref{eigenvalue_of_cay(R,xR)}, $(-1)^{|C|}\frac{|xR^{*}|}{\prod\limits_{i\in C}\frac{|x_iR_i^{*}|}{|M_{x_i}|}}$ is a non-zero eigenvalue with multiplicity $\frac{|R|}{|I_x|} \prod\limits_{i\in C}\frac{|x_iR_{i}^{*}|}{|M_{x_i}|}$. Similarly, if $C=\phi$ then $|xR^{*}|$ is also a non-zero eigenvalue of ${\rm Cay}(R,xR^{*})$ with multiplicity $\frac{|R|}{|I_x|}$. We have
\begin{align}
\sum_{i\neq 1,\lambda_i\neq 0}|\lambda_i+1| &= \sum_{C\subset P,C\neq \phi} \Bigg |(-1)^{|C|}\frac{|xR^{*}|}{\prod\limits_{i\in C}\frac{|x_iR_i^{*}|}{|M_{x_i}|}}+1\Bigg |  \left( \frac{|R|}{|I_x|}  \prod\limits_{i\in C}\frac{|x_iR_{i}^{*}|}{|M_{x_i}|}\right)  +\left(|xR^{*}|+1\right)\left(\frac{|R|}{|I_x|} -1\right)\nonumber \\
&= \sum_{C\subset P,C\neq \phi} \Bigg |(-1)^{|C|}  \frac{|R| |xR^*|}{|I_x|} + \frac{|R|}{|I_x|}  \prod\limits_{i\in C}\frac{|x_iR_{i}^{*}|}{|M_{x_i}|}\Bigg |  +\left(|xR^{*}|+1\right)\left(\frac{|R|}{|I_x|} -1\right)\nonumber \\
&=  (2^{|P|}-1)   \frac{|R| |xR^*|}{|I_x|} + \sum_{C\subset P,C\neq \phi} (-1)^{|C|} \frac{|R|}{|I_x|}  \prod\limits_{i\in C}\frac{|x_iR_{i}^{*}|}{|M_{x_i}|}  +\left(|xR^{*}|+1\right)\left(\frac{|R|}{|I_x|} -1\right)\nonumber \\
&= \frac{|R|}{|I_x|} \left[ 2^{|P|}|xR^*| +  \sum_{C\subset P,C\neq \phi} (-1)^{|C|} \prod\limits_{i\in C}\frac{|x_iR_{i}^{*}|}{|M_{x_i}|} +1\right] -\left(|xR^{*}|+1\right) \nonumber \\
&= \frac{|R|}{|I_x|} \left[ 2^{|P|}|xR^*| +  \prod\limits_{i\in P} \left( 1-\frac{|x_iR_{i}^{*}|}{|M_{x_i}|}\right) \right] -\left(|xR^{*}|+1\right). \label{Eq2-Energy_Compliment}
\end{align}

\noindent Apply Equation~(\ref{Eq1-Energy_Compliment}) and Equation~(\ref{Eq2-Energy_Compliment}) in Equation~(\ref{Eq11-Energy_Compliment}), we get
\begin{align}
\mathcal{E}(\overline{{\rm Cay}(R,xR^{*}})&=(|R|-|xR^{*}|-1)+   \frac{|R|}{|I_x|} \left[ 2^{|P|}|xR^*| +  \prod\limits_{i\in P} \left( 1-\frac{|x_iR_{i}^{*}|}{|M_{x_i}|}\right) \right] -\left(|xR^{*}|+1\right)      \nonumber\\
&~~~~~+ |R|- \frac{|R|}{|I_x|} \prod_{i\in P}\left(1+\frac{|x_iR_i^{*}|}{|M_{x_i}|}\right) \nonumber \\
&=2(|R|-|xR^{*}|-1)+   \frac{|R|}{|I_x|} \left[ 2^{|P|}|xR^*| +  \prod\limits_{i\in P} \left( 1-\frac{|x_iR_{i}^{*}|}{|M_{x_i}|}\right) - \prod_{i\in P}\left(1+\frac{|x_iR_i^{*}|}{|M_{x_i}|}\right)  \right].   \nonumber 
\end{align}
\end{proof}


\section{Ramanujan Graphs}

In this section, we give a characterization for the commutative ring $R$ for which the Cayley graph ${\rm Cay}(R,xR^{*})$ is Ramanujan. First, we will see known classification of finite rings of order $p$ and $p^2$.

\begin{lema} \label{rings_of_order_P} \cite{fine1993classification} $\mathbb{Z}_p$ and $C_p(0)$ are the only rings of prime order $p$, upto isomorphism.
\end{lema}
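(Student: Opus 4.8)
The plan is to pin the entire multiplicative structure of a ring $R$ of prime order $p$ down to a single integer parameter, and then split into two cases. First I would note that $(R,+)$ is a group of prime order, hence cyclic; fix a generator $a$, so $R=\{0,a,2a,\ldots,(p-1)a\}$, and by the distributive laws the multiplication on $R$ is completely determined once the single product $a\cdot a$ is known. Write $a\cdot a=ka$ for the unique $k\in\{0,1,\ldots,p-1\}$; then $(ma)(na)=mnk\,a$ for all $m,n\in\mathbb{Z}$, which in particular already forces $R$ to be commutative (so no commutativity hypothesis is needed in the statement).

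Next I would split on whether $k=0$. If $k=0$, every product of two elements of $R$ vanishes, so $R$ is exactly the zero-multiplication ring on the cyclic group $\mathbb{Z}_p$, i.e. $R\cong C_p(0)$. If $k\neq 0$, then $\gcd(k,p)=1$, so $k$ is invertible modulo $p$; choose $k'$ with $kk'\equiv 1\Mod{p}$. Define $\varphi\colon R\to\mathbb{Z}_p$ by $\varphi(ma)=km \bmod p$. Additivity is immediate, and
\[
\varphi\big((ma)(na)\big)=\varphi(mnk\,a)=k^2mn=(km)(kn)=\varphi(ma)\varphi(na),
\]
so $\varphi$ is a ring homomorphism; it is a bijection because multiplication by $k$ is a bijection of $\mathbb{Z}_p$. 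Hence $R\cong\mathbb{Z}_p$. (Equivalently one checks directly that $e:=k'a$ satisfies $e\cdot(na)=na=(na)\cdot e$, so $e$ is a multiplicative identity, and the map sending $e$ to $1$ is the isomorphism.)

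Finally I would observe that $\mathbb{Z}_p$ and $C_p(0)$ are genuinely non-isomorphic — the former has a nonzero product, the latter does not — so the two cases yield exactly two isomorphism classes. I do not expect any real obstacle here: the whole argument is forced once one records that a multiplication on a cyclic additive group is determined by the square of a generator. The only point needing a little care is the bookkeeping about whether ``ring'' is required to have a multiplicative identity; the argument above produces an identity automatically when $k\neq 0$ and shows none can exist when $k=0$ (for $p>1$), so the classification is valid under either convention, with $C_p(0)$ being the unique unity-free example and the basis for Lemma~\ref{PropertiesOfMx}$(v)$.
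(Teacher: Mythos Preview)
Your argument is correct and complete: reducing the multiplication on a cyclic additive group to the single parameter $k$ with $a^2=ka$, and then distinguishing $k=0$ from $k\neq 0$ via the explicit isomorphism $ma\mapsto km$, is the standard and cleanest way to prove this. The observation that commutativity is automatic and that an identity exists precisely when $k\neq 0$ is a nice touch that makes the statement convention-independent.

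As for comparison with the paper: there is nothing to compare. The paper does not prove this lemma at all --- it is stated with a citation to Fine's classification \cite{fine1993classification} and used as a black box. Your write-up therefore supplies strictly more than the paper does, giving a self-contained justification where the authors were content to quote the literature.
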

 
\begin{theorem} \cite{fine1993classification} \label{rings_of_order_p2}
There are exactly 11 rings, upto isomorphism, of order $p^2$, where $p$ is a prime. These rings are given by the following presentations:
\begin{align*}
\mathcal{A}_{p^2} &= \langle a; ~ p^2a=0, ~a^2=a ~\rangle ~= ~ \mathbb{Z}_{p^2}\\
\mathcal{B}_{p^2} &=\langle a;~ p^2a =0, ~ a^2 =pa \rangle\\
\mathcal{C}_{p^2}& = \langle a;~ p^2=0,~ a^2=0 \rangle =C_{p^2}(0)\\
\mathcal{D}_{p^2}&=\langle a,b;~ pa=pb=0,~a^2=a,~b^2=b,~ab=ba=0 \rangle =\mathbb{Z}_p + \mathbb{Z}_p\\
\mathcal{E}_{p^2} & =\langle a,b;~ pa=pb=0,~a^2=a,~b^2=b,~ab=a,~ba=b \rangle\\
\mathcal{F}_{p^2}&=\langle a,b;~ pa=pb=0,~a^2=a,~b^2=b,~ab=b,~ba=a \rangle\\
\mathcal{G}_{p^2}&=\langle a,b;~ pa=pb=0,~a^2=0,~b^2=b,~ab=a,~ba=a \rangle\\
\mathcal{H}_{p^2}&=\langle a,b;~ pa=pb=0,~a^2=0,~b^2=b,~ab=ba=0 \rangle =\mathbb{Z}_p+C_p(0)\\
\mathcal{I}_{p^2}&=\langle a,b;~ pa=pb=0,~a^2=b,~ab=0 \rangle\\
\mathcal{J}_{p^2}&=\langle a,b;~ pa=pb=0,~a^2= b^2=0 \rangle =C_p \times C_p(0)\\
\mathcal{K}_{p^2}&= GF(p^2) =~\text{finite field of order}~ p^2
\end{align*}
\end{theorem}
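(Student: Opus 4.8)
The statement is a classical result (recorded here from Fine \cite{fine1993classification}), so the ``proof'' in the paper is really a citation; what follows is how I would reconstruct it. Since $|R|=p^2$, the additive group $(R,+)$ is either $\mathbb{Z}_{p^2}$ or $\mathbb{Z}_p\oplus\mathbb{Z}_p$, and the plan is to list all associative multiplications on each of these two groups and then collapse the list under ring isomorphism. Suppose first $(R,+)=\langle a\rangle\cong\mathbb{Z}_{p^2}$. Then the multiplication is completely determined by the single product $a\cdot a=ka$ with $k\in\mathbb{Z}/p^2$, and every such choice is automatically associative. Replacing the generator $a$ by a unit multiple $ua$ with $u\in(\mathbb{Z}/p^2)^*$ sends $k$ to $uk$, so isomorphism classes correspond to orbits of $\mathbb{Z}/p^2$ under multiplication by units; there are exactly three orbits, namely $\{0\}$, the units, and the nonzero multiples of $p$, which give $\mathbb{Z}_{p^2}=\mathcal{A}_{p^2}$ (take $k=1$), $\mathcal{B}_{p^2}$ (take $k=p$), and $C_{p^2}(0)=\mathcal{C}_{p^2}$ (take $k=0$).

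Next suppose $(R,+)\cong\mathbb{Z}_p\oplus\mathbb{Z}_p$, so that $R$ is a $2$-dimensional associative $\mathbb{F}_p$-algebra, not assumed unital or commutative. I would split according to $d:=\dim_{\mathbb{F}_p}R^2$, where $R^2$ is the two-sided ideal spanned by all products. If $d=0$ we get the zero ring $\mathcal{J}_{p^2}$. If $d=1$, write $R^2=\mathbb{F}_p c$ and $xy=\phi(x,y)\,c$ for a bilinear form $\phi$ on $R$; imposing associativity (test $x=c$ and $z=c$ in $(xy)z=x(yz)$) forces either $\phi(c,c)\neq 0$, in which case one rescales $c$ to an idempotent, $\phi$ becomes symmetric and factors through $\phi(c,\cdot)$, and a basis adapted to $\ker\phi(c,\cdot)$ exhibits $\mathcal{H}_{p^2}=\mathbb{Z}_p+C_p(0)$, or $\phi(c,c)=0$, which forces $cR=Rc=0$, makes associativity automatic, and (after rescaling $c=a^2$) yields the nilpotent algebra $\mathcal{I}_{p^2}$. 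In particular no non-commutative ring occurs with $d\leq 1$.

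Finally, if $d=2$ then $R=R^2$, so $R$ cannot be nil (a finite-dimensional nil algebra is nilpotent, forcing $R^2\subsetneq R$); choosing a non-nilpotent element and passing to an idempotent among its powers, $R$ contains a nonzero idempotent $e$. Working with the Peirce decomposition relative to $e$ and the relations $e^2=e$ plus associativity, I would normalize the remaining basis vector $f$ (replacing $f$ by $f-\lambda e$ and rescaling): this shows that either $e$ can be taken to be a two-sided identity, in which case the classical trichotomy for $2$-dimensional unital $\mathbb{F}_p$-algebras (split by the minimal polynomial of a non-scalar element: two distinct roots, a double root, or irreducible) gives $\mathbb{F}_p\times\mathbb{F}_p=\mathcal{D}_{p^2}$, $\mathbb{F}_p[x]/(x^2)=\mathcal{G}_{p^2}$, and $GF(p^2)=\mathcal{K}_{p^2}$; or $R$ has no two-sided identity, in which case the off-diagonal Peirce pieces $eR(1-e)$ and $(1-e)Re$ carry the structure and one obtains the two mutually opposite non-commutative algebras $\mathcal{E}_{p^2}$ and $\mathcal{F}_{p^2}$. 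Counting yields $3+1+2+5=11$.

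The routine-but-delicate part, which is where the content actually lies, is the bookkeeping in these last two cases: (i) verifying that each of the eleven presentations really is associative; (ii) ruling out hidden isomorphisms, in particular that $\mathcal{E}_{p^2}\not\cong\mathcal{F}_{p^2}$ (one admits a line of left identities, the other a line of right identities, and a $2$-dimensional algebra with both would be unital, which neither is), and that the three unital algebras are pairwise non-isomorphic (distinguish them by their numbers of idempotents and of square-zero elements); and (iii) confirming that the Peirce case-split is exhaustive, including the normalization steps on $f$. I expect (ii) and (iii) to be the main obstacle: none of it is deep, but completeness and non-redundancy of the list stand or fall precisely there.
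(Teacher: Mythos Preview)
You have correctly observed that the paper does not prove this theorem at all: it is quoted verbatim from Fine \cite{fine1993classification} and used only as input to the Ramanujan classification in Section~5, so there is no ``paper's own proof'' to compare against. Your reconstruction is a sound and standard outline of the classification (split on the additive group, then on $\dim_{\mathbb{F}_p}R^2$, then Peirce-decompose in the top case), and your count $3+1+2+5=11$ is correct; the caveats you flag about the $d=2$ bookkeeping and the non-isomorphism checks are exactly where the work lies, but nothing you wrote is wrong.
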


In the next result, we characterize the commutative ring $R$ for which the Cayley graph ${\rm Cay}(R,xR^{*})$ is Ramanujan by assuming $x_i$ to be non-zero for all $i=1,\ldots ,s$.

\begin{theorem} \label{Ramanujan_characterization}
Let $R$ be a finite commutative ring and $x$ be a non-zero element of $R$. If $x_i$ is non-zero for all $i=1,\ldots ,s$, then ${\rm Cay}(R,xR^{*})$ is Ramanujan if and only if one of the following holds
\begin{enumerate}[label =(\roman*)]
\item $\frac{I_{x_i}}{M_{x_i}}  = \left\{ \begin{array}{rl}
	\mathbb{F}_2 &\mbox{ if }
	x_i \in R_i^* \\ 
	C_2(0) &\textnormal{ otherwise}
\end{array}\right.$ for $i=1,\ldots, s$.
\item $I_{x_i} = \left\{ \begin{array}{rl}
	\mathbb{F}_2 &\mbox{ if }
	x_i \in R_i^* \\ 
	C_2(0) &\textnormal{ otherwise}
\end{array}\right.$ for $i = 1, \ldots, s-3$ and 
$I_{x_i} = \left\{ \begin{array}{rl}
	\mathbb{F}_3 &\mbox{ if }
	x_i \in R_i^* \\ 
	C_3(0) &\textnormal{ otherwise}
\end{array}\right.$ for $i=s-2,s-1,s$.
\item $I_{x_i} = \left\{ \begin{array}{rl}
	\mathbb{F}_2 &\mbox{ if }
	x_i \in R_i^* \\ 
	C_2(0) &\textnormal{ otherwise}
\end{array}\right.$ for $i=1, \ldots, s-3$ and 
$I_{x_i} = \left\{ \begin{array}{rl}
	\mathbb{F}_3 &\mbox{ if }
	x_i \in R_i^* \\ 
	C_3(0) &\textnormal{ otherwise}
\end{array}\right.$ for $i=s-2, s-1$ and $I_{x_s} = \mathbb{F}_4$.
\item $I_{x_i} = \left\{ \begin{array}{rl}
	\mathbb{F}_2 &\mbox{ if }
	x_i \in R_i^* \\ 
	C_2(0) &\textnormal{ otherwise}
\end{array}\right.$ for $i=1, \ldots, s-3$ and $I_{x_i}=\mathbb{F}_4$ for $i= s-2,s-1,s$.
\item $I_{x_i} = \left\{ \begin{array}{rl}
	\mathbb{F}_2 &\mbox{ if }
	x_i \in R_i^* \\ 
	C_2(0) &\textnormal{ otherwise}
\end{array}\right.$ for $i=1, \ldots , s-2$ and 
$I_{x_{s-1}} = \left\{ \begin{array}{rl}
	\mathbb{F}_3 &\mbox{ if }
	x_{s-1} \in R_{s-1}^* \\ 
	C_3(0) &\textnormal{ otherwise}
\end{array}\right.$ and $I_{x_s}= \mathcal{A}_9, \mathcal{B}_9, \mathcal{C}_9, \mathcal{D}_9, \mathcal{G}_9 , \mathcal{H}_9, \mathcal{I}_9, \mathcal{J}_9$.
\item $I_{x_1} = \mathcal{A}_4, \mathcal{B}_4, \mathcal{C}_4, \mathcal{D}_4, \mathcal{G}_4 , \mathcal{H}_4, \mathcal{I}_4, \mathcal{J}_4$, and $I_{x_i} = \left\{ \begin{array}{rl}
	\mathbb{F}_2 &\mbox{ if }
	x_i \in R_i^* \\ 
	C_2(0) &\textnormal{ otherwise}
\end{array}\right.$ for $i=2,\ldots, s-2$, and 
$I_{x_{s-1}} = \left\{ \begin{array}{rl}
	\mathbb{F}_{q_1} &\mbox{ if }
	x_{s-1} \in R_{s-1}^* \\ 
	C_{p_1}(0) &\textnormal{ otherwise}
\end{array}\right.$, 
$I_{x_s} = \left\{ \begin{array}{rl}
	\mathbb{F}_{q_2} &\mbox{ if }
	x_s \in R_s^* \\ 
	C_{p_2}(0) &\textnormal{ otherwise}
\end{array}\right.$,  where $p_1$ and $p_2$ are primes and $q_1$ and $q_2$ are prime powers such that 
\begin{align*}
3\leq|I_{x_{s-1}}|\leq |I_{x_s}| \leq |I_{x_{s-1}}|+ \sqrt{|I_{x_{s-1}}|\left(|I_{x_{s-1}}|-2\right)}.
\end{align*}
\item  $I_{x_i} = \left\{ \begin{array}{rl}
	\mathbb{F}_2 &\mbox{ if }
	x_i \in R_i^* \\ 
	C_2(0) &\textnormal{ otherwise}
\end{array}\right.$ for $i=1,\ldots,s-2$, and  
$I_{x_{s-1}} = \left\{ \begin{array}{rl}
	\mathbb{F}_{q_1} &\mbox{ if }
	x_{s-1} \in R_{s-1}^* \\ 
	C_{p_1}(0) &\textnormal{ otherwise}
\end{array}\right.$, 
$I_{x_s} = \left\{ \begin{array}{rl}
	\mathbb{F}_{q_2} &\mbox{ if }
	x_s \in R_s^* \\ 
	C_{p_2}(0) &\textnormal{ otherwise}
\end{array}\right.$,  where $p_1$ and $p_2$ are primes and $q_1$ and $q_2$ are prime powers such that
\begin{align*}
3\leq |I_{x_{s-1}}| \leq |I_{x_s}| \leq 2\left(|I_{x_{s-1}}|+\sqrt{|I_{x_{s-1}}| \left(|I_{x_{s-1}}|-2\right)}\right)-1.
\end{align*}
\item $\frac{I_{x_i}}{M_{x_i}}  = \left\{ \begin{array}{rl}
	\mathbb{F}_2 &\mbox{ if }
	x_i \in R_i^* \\ 
	C_2(0) &\textnormal{ otherwise}
\end{array}\right.$ for $i=1,\ldots s-1$ and $\frac{I_{x_s}}{M_{x_s}}= S$, where $S$ is a commutative ring such that $|S|=e \geq 3$ and
\begin{align*}
\prod_{i=1}^{s}|M_{x_i}| \leq 2 \left(e-1 +\sqrt{\left(e-2\right)e}\right).
\end{align*} 
\end{enumerate}
\end{theorem}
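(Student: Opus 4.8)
The plan is to read off the spectrum from Theorem~\ref{eigenvalue_of_cay(R,xR)}, collapse the Ramanujan condition to a single numerical inequality, and then run a finite case analysis driven by Lemma~\ref{PropertiesOfMx}, Lemma~\ref{rings_of_order_P} and Theorem~\ref{rings_of_order_p2}. First I would fix notation: put $k:=|xR^{*}|=\prod_{i=1}^{s}|x_iR_i^{*}|$ and, for each $i$, $n_i:=\frac{|R_i|}{|M_i|}=\frac{|I_{x_i}|}{|M_{x_i}|}$ (equal by Lemma~\ref{PropertiesOfMx}(iii)); by Lemma~\ref{order_of_R} each $n_i$ is a prime power $\ge 2$, and $\frac{|x_iR_i^{*}|}{|M_{x_i}|}=n_i-1$ since $|x_iR_i^{*}|=|I_{x_i}|-|M_{x_i}|$. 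Theorem~\ref{eigenvalue_of_cay(R,xR)} then gives that the eigenvalues of $G={\rm Cay}(R,xR^{*})$ are $0$ together with $\pm\dfrac{k}{\prod_{i\in C}(n_i-1)}$ over $C\subseteq\{1,\ldots,s\}$ (with $C=\emptyset$ giving $k$). Now $G$ is $k$-regular with $k\ge 1$, the eigenvalue $0$ is harmless because $0\le 2\sqrt{k-1}$, and an eigenvalue $\pm k/d$ with $d=\prod_{i\in C}(n_i-1)$ has absolute value $<k$ precisely when $d\ge 2$, the largest such being attained at the smallest admissible $d$. Hence: if every $n_i=2$ there is no such $d$ and $G$ is Ramanujan unconditionally --- translating $n_i=2$ into $\frac{I_{x_i}}{M_{x_i}}$ being the order-$2$ ring ($\mathbb{F}_2$ or $C_2(0)$ according as $x_i$ is a unit, which falls out of the proof of Lemma~\ref{PropertiesOfMx}(v)) gives case~(i); otherwise, reorder so that $n_{s-t+1}\le\cdots\le n_s$ are the residues $\ge 3$ ($t\ge 1$) and set $N:=n_{s-t+1}\ge 3$, and $G$ is Ramanujan iff $\frac{k}{N-1}\le 2\sqrt{k-1}$, i.e.\ iff $k^{2}-4(N-1)^{2}k+4(N-1)^{2}\le 0$. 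Because $N-1\mid k$ forces $k\ge N-1\ge 2$ while the smaller root of this quadratic is $\frac{2(N-1)}{(N-1)+\sqrt{(N-1)^{2}-1}}\in(1,2)$, the condition is just $k\le 2(N-1)\bigl((N-1)+\sqrt{(N-1)^{2}-1}\bigr)$.

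Next I would show the configuration is finite and identify the surviving patterns. Each large index contributes a factor $|x_iR_i^{*}|=|M_{x_i}|(n_i-1)\ge N-1\ge 2$ to $k$, so $k\ge(N-1)^{t}$; since the bound above is $<4(N-1)^{2}$ this forces $t\le 3$, and $t=3$ only for $N\in\{3,4\}$. Moreover Lemma~\ref{PropertiesOfMx}(vii) and (iii) give $n_i\le|M_{x_i}|$ whenever $|M_{x_i}|>1$, and since $n_i$ and $|M_{x_i}|$ are powers of the same prime this means $|M_{x_i}|\ge n_i$, so a large index with $|M_{x_i}|>1$ already contributes $\ge n_i(n_i-1)\ge 6$ to $k$. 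Feeding these lower bounds into $k\le 2(N-1)\bigl((N-1)+\sqrt{(N-1)^{2}-1}\bigr)$ pins down, for each $t$, exactly which patterns of $(n_i,|M_{x_i}|)$ are possible; when $|M_{x_i}|=1$, Lemma~\ref{PropertiesOfMx}(iv),(v) identify $I_{x_i}$ with the field $\mathbb{F}_{n_i}$ (if $x_i\in R_i^{*}$) or with $C_p(0)$, $n_i=p$ (otherwise) --- the source of the ``$\mathbb{F}_q$ versus $C_p(0)$'' alternatives --- while the surviving patterns with $|M_{x_i}|>1$ force $|I_{x_i}|\in\{4,9\}$ with residue degree $2$ or $3$, and Theorem~\ref{rings_of_order_p2} then lists the admissible $I_{x_i}$ (the commutative order-$4$, resp.\ order-$9$, rings carrying a maximal ideal of the prescribed index). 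Carrying out the bookkeeping: $t=1$ yields case~(viii), the inequality becoming $\prod_i|M_{x_i}|\le 2\bigl(e-1+\sqrt{(e-2)e}\bigr)$ with $e=N$; $t=2$ splits, according to the multiset $\{|M_{x_i}|\}$, into case~(vii) (all $|M_{x_i}|=1$), case~(vi) (a single residue-$2$ index has $|I_{x_i}|=4$) and case~(v) (a single residue-$3$ index has $|I_{x_i}|=9$, which forces $N=3$), with the inequality specializing to the stated bound on $|I_{x_s}|$ in each; $t=3$ splits into case~(ii) (residues $3,3,3$), case~(iii) (residues $3,3,4$) and case~(iv) (residues $4,4,4$), all with every $|M_{x_i}|=1$. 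In every case the converse --- that the listed configuration satisfies the displayed inequality, hence is Ramanujan --- is checked by direct substitution.

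The spectral and numerical reductions are routine; the real work is proving these eight families are \emph{exhaustive}. That requires playing the sharp numerical bound of the second paragraph off against the two arithmetic constraints ($n_i$ and $|M_{x_i}|$ are powers of a common prime, and $|M_{x_i}|>1\Rightarrow|M_{x_i}|\ge n_i$) to eliminate every ``mixed'' pattern of residues and annihilator sizes, and then translating the surviving numeric data $(|I_{x_i}|,n_i)$ into ring-isomorphism types via Lemma~\ref{rings_of_order_P} and Theorem~\ref{rings_of_order_p2}. A minor subtlety is that the ordering $n_1\le\cdots\le n_s$ does not resolve ties among equal residues, so some of the cases hold only up to permuting such indices.
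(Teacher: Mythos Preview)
Your proposal is correct and follows essentially the same route as the paper: reduce the Ramanujan condition to the single inequality $k/(N-1)\le 2\sqrt{k-1}$ for the smallest residue $N\ge 3$, use the consequence $k<4(N-1)^{2}$ to bound the number of large-residue factors by three, and then case-split on that number together with $\prod_i|M_{x_i}|$, invoking Lemma~\ref{PropertiesOfMx} and the order-$p$, order-$p^{2}$ ring classifications to identify the surviving $I_{x_i}$. The only differences are cosmetic --- you solve the quadratic in $k$ explicitly at the outset while the paper carries the unsquared inequality through each case, and your $t$ counts the large-residue indices whereas the paper's $t$ counts the small ones.
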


\begin{proof}
By Theorem~\ref{eigenvalue_of_cay(R,xR)}, ${\rm Cay}(R,xR^{*})$ is Ramanujan if and only if $|\lambda_{C}| \leq 2\sqrt{|xR^{*}|-1}$ for all $\lambda_C$ other than $\pm |xR^*|$, where $C \subseteq \{1, \ldots, s\}$ and 
\begin{align}
\lambda_{C}=(-1)^{|C|}\frac{|xR^{*}|}{\prod\limits_{i\in C}\left(\frac{|I_{x_i}|}{|M_{x_i}|}-1\right)}. \label{Eq1Chara_Ramanujan}
\end{align}

In general, we have $|I_{x_i}| \geq 2|M_{x_i}|$ for each $i=1, \ldots, s$. If $\frac{|I_{x_i}|}{|M_{x_i}|}=2$ for all $i=1,\cdots, s$ then $\pm |xR^{*}|$ are the only non-zero eigenvalue of ${\rm Cay}(R,xR^{*})$. Lemma~\ref{rings_of_order_P} implies that $\frac{I_{x_i}}{M_{x_i}}$ is isomorphic to either $\mathbb{F}_2$ (if $x_i\in R_i^*$) or $C_2(0)$ (if $x_i \not\in R_i^*$) for each $i =1,\ldots, s$. Hence if $\frac{|I_{x_i}|}{|M_{x_i}|}=2$ for all $i=1,\cdots, s$ then ${\rm Cay}(R,xR^{*})$ is Ramanujan if and only if the condition $(i)$ holds.

On the other hand, assume that  $\frac{|I_{x_i}|}{|M_{x_i}|}>2$ for some $i$. Let $t+1$ be the smallest integer such that $\frac{|I_{x_{t+1}}|}{|M_{x_{t+1}}|} > 2$ with $t \in \{0,1\ldots,s-1\}$. We have
\begin{equation}\label{ordering_on_the_order_of_IX}
2=\frac{|I_{x_i}|}{|M_{x_i}|}=\cdots =\frac{|I_{x_t}|}{|M_{x_t}|}<\frac{|I_{x_{t+1}}|}{|M_{x_{t+1}}|}\leq \cdots \leq \frac{|I_{x_s}|}{|M_{x_s}|}.
\end{equation}
If $C \subseteq \{1,\ldots, t\}$ then Equation~(\ref{Eq1Chara_Ramanujan}) and Equation~(\ref{ordering_on_the_order_of_IX}) implies $|\lambda_C|= |xR^{*}|$. Similarly, if $C\cap \{t+1,\ldots,s \} \neq \emptyset$ then Equation~(\ref{Eq1Chara_Ramanujan}) and Equation~(\ref{ordering_on_the_order_of_IX}) implies $|\lambda_{C}|=|\lambda_{C\cap \{t+1,\ldots, s\}}| \leq |\lambda_{\{t+1\}}|<|xR^{*}|$. Thus $|\lambda_{\{t+1\}}|$ is the second largest absolute value of eigenvalue of ${\rm Cay}(R,xR^{*})$. Hence,  ${\rm Cay}(R,xR^{*})$ is Ramanujan if and only if $|\lambda_{\{t+1\}}| \leq 2 \sqrt{|xR^{*}|-1}$, equivalent to say, 
 \begin{equation}\label{Main_Rama_con}
  \frac{|xR^{*}|}{\left(\frac{|I_{x_{t+1}}|}{|M_{x_{t+1}}|}-1\right)} \leq 2\sqrt{|xR^{*}|-1}.
 \end{equation}
 Using $\sqrt{|xR^{*}|-1} <  \sqrt{|xR^{*}|}$, Equation~(\ref{Main_Rama_con}) implies
 \begin{equation} \label{Eq2Chara_Ramanujan}
 |xR^{*}| < 4 \left(\frac{|I_{x_{t+1}}|}{|M_{x_{t+1}}|}-1\right)^2.
 \end{equation}
 Using Equation (\ref{card_of_xR*}), Equation~(\ref{Eq2Chara_Ramanujan}) implies
 \begin{equation}\label{if_condition_for_ramanujan}
 \prod_{i=1}^s|M_{x_i}|\prod_{i=t+2}^s \left( \frac{|I_{x_i}|}{|M_{x_i}|}-1\right) < 4\left(\frac{|I_{x_{t+1}}|}{|M_{x_{t+1}}|}-1\right).
 \end{equation}
 
If $s \geq t+4$ then using Equation~(\ref{ordering_on_the_order_of_IX}) we get  $\prod\limits_{i=t+2}^s \left( \frac{|I_{x_i}|}{|M_{x_i}|}-1\right) \geq 4\left(\frac{|I_{x_{t+1}}|}{|M_{x_{t+1}}|}-1\right)$. Thus Equation (\ref{if_condition_for_ramanujan}) does not hold. Therefore if $s\geq t+4$, then there is no commutative ring $R$ exist such that Cayley graph ${\rm Cay}(R,xR^{*})$ is Ramanujan. Now we have remaining three cases, $s=t+3$, $s=t+2$, and $s=t+1$.\\
 \textbf{Case 1}: $s=t+3$\\
 We rewrite the Equation (\ref{if_condition_for_ramanujan})
\begin{equation} \label{S=t+3_case}
  \prod_{i=1}^s |M_{x_i}|\left( \frac{|I_{x_{t+2}}|}{|M_{x_{t+2}}|}-1\right)\left(\frac{|I_{x_{t+3}}|}{|M_{x_{t+3}}|}-1\right)<4\left(\frac{|I_{x_{t+1}}|}{|M_{x_{t+1}}|}-1\right).
\end{equation}
Note that $ \left( \frac{|I_{x_{t+2}}|}{|M_{x_{t+2}}|}-1\right)\left(\frac{|I_{x_{t+3}}|}{|M_{x_{t+3}}|}-1\right) \geq 2\left(\frac{|I_{x_{t+1}}|}{|M_{x_{t+1}}|}-1\right)$. If $\prod_{i=1}^{s}|M_{x_i}| \geq 2$ then equation (\ref{S=t+3_case}) does not hold. Therefore, if  $\prod_{i=1}^{s}|M_{x_i}| \geq 2$ then there is no commutative ring $R$ exist such that ${\rm Cay}(R,xR^{*})$ is Ramanujan.
Assume that $\prod_{i=1}^{s}|M_{x_i}| =1$. Equation (\ref{S=t+3_case}) implies
\begin{equation}\label{Simplified_s=t+3_case}
\left(|I_{x_{t+2}}|-1\right)\left(|I_{x_{t+3}}|-1\right)<4\left(|I_{x_{t+1}}|-1\right). 
\end{equation} We have the following two cases:\\ 
Case 1.1: $|I_{x_{t+1}}|=|I_{x_{t+2}}|$\\
Equation (\ref{Simplified_s=t+3_case}) implies $|I_{x_{t+3}}| \leq 4$. From equation (\ref{ordering_on_the_order_of_IX}), we have the following three cases:
\begin{align*}
(a)&~~~ |I_{x_{t+1}}| = |I_{x_{t+2}}| = |I_{x_{t+3}}|=3\\
(b)&~~~|I_{x_{t+1}}| = |I_{x_{t+2}}| =3, |I_{x_{t+3}}|=4\\
(c)&~~~|I_{x_{t+1}}| = |I_{x_{t+2}}| = |I_{x_{t+3}}|=4
\end{align*} 
The cases $(a),(b),(c)$ satisfy to Equation (\ref{Main_Rama_con}). Hence, in this particular case, the Cayley graph ${\rm Cay}(R,xR^{*})$ is Ramanujan if and only if  $R$ satisfies any one of the cases (a), (b), (c). Note that  $\prod\limits_{i=1}^{s}|M_{x_i}| =1$, and so $|M_{x_i}|=1$ for all $i=1,\ldots,s$. By Equation \ref{ordering_on_the_order_of_IX}, we have $|I_{x_i}|=2$ for all $i=1,\ldots,t$. Therefore, $I_{x_i} = \mathbb{F}_2$ (if $x_i \in R_i^*$) or $C_2(0)$ (if $x_i \not\in R_i^*$) for all $i=1,\ldots,t$. Using Part $(iv)$ and Part $(v)$ of Lemma~\ref{PropertiesOfMx} and Lemma~\ref{rings_of_order_P}, we write the cases in the equivalent way:
\begin{align*}
(a)&~~~ \textnormal{$I_{x_i} = \mathbb{F}_3$ (if $x_i \in R_i^*$) or $C_3(0)$ (if $x_i \not\in R_i^*$) for all $i=t+1,t+2,t+3$}. \\
(b)&~~~  \textnormal{$I_{x_i} = \mathbb{F}_3$ (if $x_i \in R_i^*$) or $C_3(0)$ (if $x_i \not\in R_i^*$) for all $i=t+1,t+2$, and $I_{x_{t+3}}=\mathbb{F}_4$}. \\
(c)&~~~ \textnormal{$I_{x_i} = \mathbb{F}_4$ for all $i=t+1,t+2,t+3$}.
\end{align*} 
Hence, in this particular case, the Cayley graph ${\rm Cay}(R,xR^{*})$ is Ramanujan if and only if  $R$ satisfies any one of the condition $(ii), (iii)$ and $(iv)$.\\
Case 1.2: $|I_{x_{t+1}}| < |I_{x_{t+2}}|$\\
Equation (\ref{Simplified_s=t+3_case}) is equivalent to $|I_{x_{t+1}}|=3, |I_{x_{t+2}}|=4$ and $|I_{x_{t+3}}|=4$. But it does not satisfy by Equation (\ref{Main_Rama_con}). Hence, in this particular case,  there is no commutative ring $R$ exist such that ${\rm Cay}(R,xR^{*})$ is Ramanujan.\\
\textbf{Case 2}: $s=t+2$\\
In this case, Equation (\ref{if_condition_for_ramanujan}) reduce to
\begin{align} \label{1st_cond_S=t+2}
\prod_{i=1}^s|M_{x_i}|\left(\frac{|I_{x_{t+2}}|}{|M_{x_{t+2}}|}-1 \right) < 4\left(\frac{|I_{x_{t+1}}|}{|M_{x_{t+1}}|} -1\right).
\end{align}
Observe that if $\prod_{i=1}^s|M_{x_i}| \geq 4$ then Equation (\ref{1st_cond_S=t+2}) does not hold. Hence,  if $\prod_{i=1}^s|M_{x_i}| \geq 4$ then there is no commutative ring $R$ exist such that ${\rm Cay}(R,xR^{*})$ is Ramanujan. Therefore, we have either  $\prod\limits_{i=1}^s|M_{x_i}|=3$,  $\prod\limits_{i=1}^s|M_{x_i}|=2$, or  $\prod\limits_{i=1}^s|M_{x_i}|=1$.\\
Case 2.1: $\prod\limits_{i=1}^s|M_{x_i}|=3$\\
In this case Equation (\ref{Main_Rama_con}) reduce to
\begin{align} \label{S=t+2_1st_case }
3\left(\frac{|I_{x_{t+2}}|}{|M_{x_{t+2}}|}-1\right) \leq 2\sqrt{3\left(\frac{|I_{x_{t+1}}|}{|M_{x_{t+1}}|}-1\right)\left(\frac{|I_{x_{t+2}}|}{|M_{x_{t+2}}|}-1\right)-1}.
\end{align}
Since $\prod\limits_{i=1}^s|M_{x_i}|=3$, there exist  unique $j \in \{1,\ldots,s\}$ such that $|M_{x_j}|=3$. By Part $(vii)$ of Lemma~\ref{PropertiesOfMx}, we get $|I_{x_{j}}| \leq 9$. Using the fact that $M_{x_j}$ is a proper ideal of $I_{x_{j}}$, we get $|I_{x_j}|=9$. And so $\frac{|I_{x_{j}}|}{|M_{x_j}|}=3$ implies $j = t+1$ or $j=t+2$. Therefore we have two cases:
\begin{align*}
(a) &~~~ \textnormal{$|M_{x_{i}}|=1$ for all $ i \in \{1,\ldots, t\}$, $|M_{x_{t+1}}|=3$, and $|M_{x_{t+2}}|=1$.}\\
(b) &~~~ \textnormal{$|M_{x_{i}}|=1 $ for all $ i \in \{1,\ldots, t\}$, $|M_{x_{t+1}}|=1$, and $|M_{x_{t+2}}|=3$.} 
\end{align*} 

In the case $(a)$, $\mathcal{A}_9,\mathcal{B}_9,\mathcal{C}_9,\mathcal{D}_9,\mathcal{G}_9,\mathcal{H}_{9},\mathcal{I}_9,\mathcal{J}_9$ are the only possibilities for $I_{x_{t+1}}$ (follows from theorem \ref{rings_of_order_p2}) and $I_{x_{t+2}} = \mathbb{F}_q $ (if $x_{t+2} \in R_{t+2}^*$) or $C_p(0)$ (if $x_{t+2} \not\in R_{t+2}^*$), where $p$ is a prime and $q$ is some power of prime (see Part $(v)$ of Lemma~\ref{PropertiesOfMx}). Rewrite the Equation (\ref{S=t+2_1st_case }) in another equivalent form
\begin{align}
\frac{|I_{x_{t+2}}|}{|M_{x_{t+2}}|} \leq \frac{2}{3}\left(\frac{|I_{x_{t+1}}|}{|M_{x_{t+1}}|}+\sqrt{\left(\frac{|I_{x_{t+1}}|}{|M_{x_{t+1}}|}-1\right)\left(\frac{|I_{x_{t+1}}|}{|M_{x_{t+1}}|}\right)}\right)+\frac{1}{3}. \label{Eq13_Ramanujan_Character}
\end{align}

In this case, we have $|M_{x_{t+2}}|=1,|M_{x_{t+1}}|=3$, and $|I_{x_{t+1}}|=9$. Equation~\ref{Eq13_Ramanujan_Character} implies that  the Cayley graph ${\rm Cay}(R,xR^*)$ is Ramanujan if and only if $|I_{x_{t+2}}|=3$, equivalent to say, $R$ satisfies the condition $(v)$. In the case (b), we have$\frac{|I_{x_{t+2}}|}{|M_{x_{t+2}}|}=3$. Therefore, Equation (\ref{ordering_on_the_order_of_IX}) implies  $|I_{x_{t+1}}|=3$. Again, the Cayley graph ${\rm Cay}(R,xR^*)$ is Ramanujan if and only if $R$ satisfies the condition $(v)$.\\
Case 2.2 $\prod\limits_{i=1}^s |M_{x_{i}}|=2$\\
In this case the Equation (\ref{Main_Rama_con}) reduce to
\begin{align}\label{s=t+2_case2}
2\left(\frac{|I_{x_s}|}{|M_{x_s}|}-1\right)\leq 2\sqrt{2\left(\frac{|I_{x_s}|}{|M_{x_s}|}-1\right)\left(\frac{|I_{x_{s-1}}|}{|M_{x_{s-1}}|}-1\right)-1}.
\end{align}
As $\prod\limits_{i=1}^s|M_{x_i}|=2$, therefore there exist unique $j \in \{1, \ldots, s\}$ such that $|M_{x_j}|=2$. By Part $(vii)$ of Lemma~\ref{PropertiesOfMx} we get $|I_{x_j}| \leq 4$. Using the fact that $M_{x_j}$ is a proper ideal of $I_{x_j}$, therefore we get $|I_{x_j}|=4$. And so $\frac{|I_{x_j}|}{|M_{x_j}|} =2$ implies $ j \in \{1, \ldots, t\}$. Without loss of generality assume that $j=1$, and so $|M_{x_1}|=2$ and $|M_{x_i}|=1$ for all $i \in \{2 \ldots s\}$. In Theorem \ref{rings_of_order_p2}, $\mathcal{A}_4,\mathcal{B}_4,\mathcal{C}_4,\mathcal{D}_4,\mathcal{G}_4,\mathcal{H}_{4},\mathcal{I}_4,\mathcal{J}_4$ are the only commutative rings of order $4$ such that the cardinality of maximal ideal is $2$. Hence we have $I_{x_1} =\mathcal{A}_4,\mathcal{B}_4,\mathcal{C}_4,\mathcal{D}_4,\mathcal{G}_4,\mathcal{H}_{4},\mathcal{I}_4,\mathcal{J}_4$, $I_{x_{i}}= \mathbb{F}_2$ (if $x_i\in R_i^*$) or $C_2(0)$ (if $x_i\not\in R_i^*$) for each $ i \in \{2 \ldots t\}$, and $I_{x_{t+1}}=\mathbb{F}_{q_1}$ (if $x_{t+1}\in R_{t+1}^*$) or $C_{p_1}(0)$ (if $x_{t+1}\not \in R_{t+1}^*$), and $I_{x_{t+2}}=\mathbb{F}_{q_2}$  (if $x_{t+2}\in R_{t+2}^*$) or $ C_{p_2}(0)$ (if $x_{t+2}\not\in R_{t+2}^*$), where $p_1$ and $p_2
$ are primes and $q_1$ and $q_2$ are prime powers. By equation (\ref{s=t+2_case2}), ${\rm  Cay}(R,xR^{*})$ is Ramanujan if and only if it satisfies the following equation\\
\begin{align*}
2\left(|I_{x_{t+2}}|-1\right) \leq 2 \sqrt{2\left(|I_{x_{t+1}}|-1\right)\left(|I_{x_{t+2}}|-1\right)-1},
\end{align*}
which is equivalent to 
\begin{align*}
|I_{x_{t+2}}| \leq |I_{x_{t+1}}|+ \sqrt{|I_{x_{t+1}}|\left(|I_{x_{t+1}}|-2\right)}.
\end{align*} 
Hence this gives condition $(vi)$.\\
Case 2.3 $\prod\limits_{i=1}^s|M_{x_i}|=1$\\
In this case, $I_{x_i} = \mathbb{F}_2$ (if $x_i\in R_i^*$) or $ C_2(0)$ (if $x_i\not\in R_i^*$)  for each $i \in \{1 \ldots t\}$, $I_{x_{t+1}}=\mathbb{F}_{q_1}$ (if $x_{t+1}\in R_{t+1}^*$) or $C_{p_1}(0)$ (if $x_{t+1}\not \in R_{t+1}^*$), and $I_{x_{t+2}}=\mathbb{F}_{q_2}$  (if $x_{t+2}\in R_{t+2}^*$) or $ C_{p_2}(0)$ (if $x_{t+2}\not\in R_{t+2}^*$), where $p_1$ and $p_2$ are primes and $q_1$ and $q_2$ are prime powers. In this case ${\rm Cay}(R,xR^{*})$ is Ramanujan if and only if it satisfies equation (\ref{Main_Rama_con}), that is,
\begin{align*}
|I_{x_{t+2}}|-1 \leq 2 \sqrt{\left(|I_{x_{t+1}}|-1\right)\left(|I_{x_{t+2}}|-1\right)-1},
\end{align*} 
which is equivalent to 
\begin{align*}
|I_{x_{t+2}}| \leq 2\left(|I_{x_{t+1}}|+\sqrt{|I_{x_{t+1}}| \left(|I_{x_{t+1}}|-2\right)}\right)-1.
\end{align*}
Hence this gives condition $(vii)$.\\
\textbf{Case 3}: $s=t+1$\\
In this case Equation (\ref{Main_Rama_con}) reduce to
\begin{align*}
\prod_{i=1}^s|M_{x_i}| \leq 2 \sqrt{\prod_{i=1}^{t+1}|M_{x_{i}}|\left(\frac{|I_{x_s}|}{|M_{x_s}|}-1\right)-1},
\end{align*}
which is equivalent to
\begin{align*}
\prod_{i=1}^s|M_{x_i}| \leq 2\left(\sqrt{\frac{|I_{x_s}|}{|M_{x_s}|}\left(\frac{|I_{x_s}|}{|M_{x_s}|}-2\right)}+\frac{|I_{x_s}|}{|M_{x_s}|}-1\right).
\end{align*}
Hence ${\rm Cay}(R,xR^{*})$ will be Ramanujan as given in $(viii)$.\\
\end{proof}

\begin{corollary}
Let $R$ be a local ring and $x$ be a non-zero element of $R$. Then ${\rm Cay}(R,xR^{*})$ is Ramanujan if and only if one of the following holds:
\begin{enumerate}[label =(\roman*)]
\item $\frac{I_{x}}{M_{x}}  = \left\{ \begin{array}{rl}
	\mathbb{F}_2 &\mbox{ if }
	x \in R^* \\ 
	C_2(0) &\textnormal{ otherwise}
\end{array}\right.$, equivalent to say, $|I_x|=2|M_x|$.
\item $|I_x| \geq \left(\frac{|M_x|}{2}+1\right)^2$.
\end{enumerate}
\end{corollary}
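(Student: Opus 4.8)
The plan is to specialize the analysis to the local case, where $s=1$ and the full spectrum is already available. Since $R$ is local, Theorem~\ref{R_as_a_product of local rings}(i) shows ${\rm Cay}(R,xR^{*})$ is $|xR^{*}|$-regular, and Lemma~\ref{eigenvalue_corr_to_local_ring} lists its eigenvalues as $|xR^{*}|$, $-|M_x|$, and $0$. I would use two identities throughout: $|xR^{*}|=|I_x|-|M_x|$, because $I_x$ is the disjoint union of $xR^{*}$ and $M_x$; and $\frac{|I_x|}{|M_x|}=\frac{|R|}{|M|}\ge 2$ by Lemma~\ref{PropertiesOfMx}(iii), so $|I_x|\ge 2|M_x|$ always. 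Also, since $1\in R^{*}$ we have $x\in xR^{*}$, hence $|xR^{*}|\ge 1$.

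First I would reduce the Ramanujan property to one inequality. By definition, ${\rm Cay}(R,xR^{*})$ is Ramanujan iff every eigenvalue $\lambda$ with $|\lambda|<|xR^{*}|$ satisfies $|\lambda|\le 2\sqrt{|xR^{*}|-1}$. The eigenvalue $0$ always meets this bound (as $|xR^{*}|\ge 1$), while $|xR^{*}|$ equals the degree and is exempt. Thus the only possible obstruction is the eigenvalue $-|M_x|$, and it obstructs precisely when both $|M_x|<|xR^{*}|$ and $|M_x|>2\sqrt{|xR^{*}|-1}$ hold. Consequently the graph is Ramanujan iff either $|M_x|\ge|xR^{*}|$, or else $|M_x|<|xR^{*}|$ together with $|M_x|\le 2\sqrt{|xR^{*}|-1}$.

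Next I would unwind these two alternatives. If $|M_x|\ge|xR^{*}|=|I_x|-|M_x|$, then $2|M_x|\ge|I_x|$, which combined with $|I_x|\ge 2|M_x|$ forces $|I_x|=2|M_x|$, i.e. condition (i); conversely $|I_x|=2|M_x|$ gives $|M_x|=|xR^{*}|$, so the graph is Ramanujan. In the remaining range $|I_x|>2|M_x|$, the inequality $|M_x|\le 2\sqrt{|xR^{*}|-1}$ is, after squaring (both sides nonnegative) and substituting $|xR^{*}|=|I_x|-|M_x|$, equivalent to $|M_x|^{2}+4|M_x|+4\le 4|I_x|$, that is, $|I_x|\ge\bigl(\tfrac{|M_x|}{2}+1\bigr)^{2}$, which is condition (ii). Assembling: (i) implies Ramanujan; if (i) fails then $|I_x|>2|M_x|$ and Ramanujan is equivalent to (ii); and (ii) always delivers the bound whenever $|M_x|<|xR^{*}|$. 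Hence ${\rm Cay}(R,xR^{*})$ is Ramanujan iff (i) or (ii) holds. The displayed ring-theoretic form of (i) is then read off from $|I_x|=2|M_x|\Leftrightarrow\frac{|I_x|}{|M_x|}=2$ together with Lemma~\ref{PropertiesOfMx}(iii)--(v) and the classification of prime-order rings (Lemma~\ref{rings_of_order_P}).

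Alternatively, the statement is the $s=1$ specialization of Theorem~\ref{Ramanujan_characterization}: its condition (i) collapses to (i) here, while Case~3 ($s=t+1$) of that proof, with a single factor, yields $|M_x|\le 2\bigl(\sqrt{\tfrac{|I_x|}{|M_x|}(\tfrac{|I_x|}{|M_x|}-2)}+\tfrac{|I_x|}{|M_x|}-1\bigr)$, which one rearranges to $|I_x|\ge(\tfrac{|M_x|}{2}+1)^{2}$. I anticipate no genuine obstacle here; the only care needed is the sign condition when squaring, the degenerate cases $|M_x|=1$ and $|xR^{*}|\le 1$, and checking that the disjunction ``(i) or (ii)'' matches the derived criterion exactly even though the two conditions overlap (which happens only when $|M_x|=2$, $|I_x|=4$).
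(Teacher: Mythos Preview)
Your proposal is correct. The paper's own proof is exactly your ``Alternatively'' paragraph: it sets $s=1$ in Theorem~\ref{Ramanujan_characterization}, notes that only conditions (i) and (viii) there survive, and then rewrites (viii) as $|I_x|\ge\bigl(\tfrac{|M_x|}{2}+1\bigr)^{2}$. Your primary argument takes a genuinely different and more elementary route: rather than invoking the full case analysis of Theorem~\ref{Ramanujan_characterization}, you work directly from the three-eigenvalue spectrum in Lemma~\ref{eigenvalue_corr_to_local_ring}, observe that only $-|M_x|$ can violate the Ramanujan bound, and reduce the question to the single inequality $|M_x|\le 2\sqrt{|xR^{*}|-1}$ (or the degenerate case $|M_x|=|xR^{*}|$). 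This buys independence from the lengthy proof of Theorem~\ref{Ramanujan_characterization}; the paper's route buys brevity once that theorem is already in hand. Either way the algebra collapsing $|M_x|^{2}\le 4(|I_x|-|M_x|-1)$ to $(|M_x|+2)^{2}\le 4|I_x|$ is the same.
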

\begin{proof} Take $s=1$ in Theorem \ref{Ramanujan_characterization}. Therefore, ${\rm Cay}(R,xR^{*})$ is Ramanujan if and only if either condition $(i)$ or condition $(viii)$ of Theorem \ref{Ramanujan_characterization} holds. The proof follows from the fact that condition $(viii)$ of Theorem \ref{Ramanujan_characterization} is equivalent to $|I_x| \geq \left(\frac{|M_x|}{2}+1\right)^2$.
\end{proof}

Let $R$ be a finite commutative ring, $x$ be a non-zero element of $R$, and $P=\{ i \colon  i \in \{ 1, \ldots, s \} \text{ and } x_i \neq \bold{0}\}$.  Without loss of generality, assume that $P=\{ h_1, \ldots , h_r\}$ with $h_1\leq \ldots \leq h_r$. Define $R':=R_{h_1}\times \cdots \times R_{h_r}$ and $y:=(x_{h_1} , \ldots , x_{h_r})$.  Since if $i \not\in P$ then the $(0,1)$-adjacency matrix of ${\rm Cay}(R_{i},x_{i}R_{i}^{*})$ is the identity matrix, by Theorem~\ref{Eigenvalues_of_tensor_product_of_graphs} both graphs $\otimes_{i=1}^{s} {\rm Cay}(R_{i},x_{i}R_{i}^{*})$ and $\otimes_{i\in P} {\rm Cay}(R_{i},x_{i}R_{i}^{*})$ have same set of eigenvalues, but their multiplicities may be different. By Theorem~\ref{R_as_a_product of local rings},  both graphs ${\rm Cay}(R,xR^{*})$ and $\otimes_{i=1}^{s} {\rm Cay}(R_{i},x_{i}R_{i}^{*})$ are isomorphic. Again, using Theorem~\ref{R_as_a_product of local rings}, both graphs ${\rm Cay}(R',yR'^{*})$ and $\otimes_{i\in P} {\rm Cay}(R_{i},x_{i}R_{i}^{*})$ are isomorphic. Hence both graphs ${\rm Cay}(R,xR^{*})$ and ${\rm Cay}(R',yR'^{*})$ have same set of eigenvalues, but their multiplicities may be different. Using the fact that ${\rm Cay}(R,xR^{*})$ is Ramanujan if and only if ${\rm Cay}(R',yR'^{*})$ is Ramanujan, we conclude the following result.

\begin{theorem} \label{arb_Ramanujan_characterization}
Let $R$ be a finite commutative ring and $x$ be a non-zero element of $R$. Then ${\rm Cay}(R,xR^{*})$ is Ramanujan if and only if one of the following holds;
\begin{enumerate}[label =(\roman*)]
\item $\frac{I_{y_i}}{M_{y_i}}  = \left\{ \begin{array}{rl}
	\mathbb{F}_2 &\mbox{ if }
	y_i \in R_i^* \\ 
	C_2(0) &\textnormal{ otherwise}
\end{array}\right.$ for $i=1,\ldots, r$.
\item $I_{y_i} = \left\{ \begin{array}{rl}
	\mathbb{F}_2 &\mbox{ if }
	y_i \in R_i^* \\ 
	C_2(0) &\textnormal{ otherwise}
\end{array}\right.$ for $i = 1, \ldots, r-3$ and 
$I_{y_i} = \left\{ \begin{array}{rl}
	\mathbb{F}_3 &\mbox{ if }
	y_i \in R_i^* \\ 
	C_3(0) &\textnormal{ otherwise}
\end{array}\right.$ for $i=r-2,r-1,r$.
\item $I_{y_i} = \left\{ \begin{array}{rl}
	\mathbb{F}_2 &\mbox{ if }
	y_i \in R_i^* \\ 
	C_2(0) &\textnormal{ otherwise}
\end{array}\right.$ for $i=1, \ldots, r-3$ and 
$I_{y_i} = \left\{ \begin{array}{rl}
	\mathbb{F}_3 &\mbox{ if }
	y_i \in R_i^* \\ 
	C_3(0) &\textnormal{ otherwise}
\end{array}\right.$ for $i=r-2, r-1$ and $I_{y_r} = \mathbb{F}_4$.
\item $I_{y_i} = \left\{ \begin{array}{rl}
	\mathbb{F}_2 &\mbox{ if }
	y_i \in R_i^* \\ 
	C_2(0) &\textnormal{ otherwise}
\end{array}\right.$ for $i=1, \ldots, r-3$ and $I_{y_i}=\mathbb{F}_4$ for $i= r-2,r-1,r$.
\item $I_{y_i} = \left\{ \begin{array}{rl}
	\mathbb{F}_2 &\mbox{ if }
	y_i \in R_i^* \\ 
	C_2(0) &\textnormal{ otherwise}
\end{array}\right.$ for $i=1, \ldots , r-2$ and 
$I_{y_{r-1}} = \left\{ \begin{array}{rl}
	\mathbb{F}_3 &\mbox{ if }
	y_{r-1} \in R_{r-1}^* \\ 
	C_3(0) &\textnormal{ otherwise}
\end{array}\right.$ and $I_{y_r}= \mathcal{A}_9, \mathcal{B}_9, \mathcal{C}_9, \mathcal{D}_9, \mathcal{G}_9 , \mathcal{H}_9, \mathcal{I}_9, \mathcal{J}_9$.
\item $I_{y_1} = \mathcal{A}_4, \mathcal{B}_4, \mathcal{C}_4, \mathcal{D}_4, \mathcal{G}_4 , \mathcal{H}_4, \mathcal{I}_4, \mathcal{J}_4$, and $I_{y_i} = \left\{ \begin{array}{rl}
	\mathbb{F}_2 &\mbox{ if }
	y_i \in R_i^* \\ 
	C_2(0) &\textnormal{ otherwise}
\end{array}\right.$ for $i=2,\ldots, r-2$, and 
$I_{y_{r-1}} = \left\{ \begin{array}{rl}
	\mathbb{F}_{q_1} &\mbox{ if }
	y_{r-1} \in R_{r-1}^* \\ 
	C_{p_1}(0) &\textnormal{ otherwise}
\end{array}\right.$, 
$I_{y_r} = \left\{ \begin{array}{rl}
	\mathbb{F}_{q_2} &\mbox{ if }
	y_r \in R_r^* \\ 
	C_{p_2}(0) &\textnormal{ otherwise}
\end{array}\right.$,  where $p_1$ and $p_2$ are primes and $q_1$ and $q_2$ are prime powers such that 
\begin{align*}
3\leq|I_{y_{r-1}}|\leq |I_{y_r}| \leq |I_{y_{r-1}}|+ \sqrt{|I_{y_{r-1}}|\left(|I_{y_{r-1}}|-2\right)}.
\end{align*}
\item  $I_{y_i} = \left\{ \begin{array}{rl}
	\mathbb{F}_2 &\mbox{ if }
	y_i \in R_i^* \\ 
	C_2(0) &\textnormal{ otherwise}
\end{array}\right.$ for $i=1,\ldots,r-2$, and  
$I_{y_{r-1}} = \left\{ \begin{array}{rl}
	\mathbb{F}_{q_1} &\mbox{ if }
	y_{r-1} \in R_{r-1}^* \\ 
	C_{p_1}(0) &\textnormal{ otherwise}
\end{array}\right.$, 
$I_{y_r} = \left\{ \begin{array}{rl}
	\mathbb{F}_{q_2} &\mbox{ if }
	y_r \in R_r^* \\ 
	C_{p_2}(0) &\textnormal{ otherwise}
\end{array}\right.$,  where $p_1$ and $p_2$ are primes and $q_1$ and $q_2$ are prime powers such that
\begin{align*}
3\leq |I_{y_{r-1}}| \leq |I_{y_r}| \leq 2\left(|I_{y_{r-1}}|+\sqrt{|I_{y_{r-1}}| \left(|I_{y_{r-1}}|-2\right)}\right)-1.
\end{align*}
\item $\frac{I_{y_i}}{M_{y_i}}  = \left\{ \begin{array}{rl}
	\mathbb{F}_2 &\mbox{ if }
	y_i \in R_i^* \\ 
	C_2(0) &\textnormal{ otherwise}
\end{array}\right.$ for $i=1,\ldots r-1$ and $\frac{I_{y_r}}{M_{y_r}}= S$, where $S$ is a commutative ring such that $|S|=e \geq 3$ and
\begin{align*}
\prod_{i=1}^{r}|M_{y_i}| \leq 2 \left(e-1 +\sqrt{\left(e-2\right)e}\right).
\end{align*} 
\end{enumerate}
where $y_i$ for $i=1, \ldots r$ is defined as above.
\end{theorem}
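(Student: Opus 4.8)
The idea is to reduce the arbitrary-ring case to the already-established Theorem~\ref{Ramanujan_characterization}, where every coordinate $x_i$ is assumed non-zero. First I would set up the reduction precisely: given $R=R_1\times\cdots\times R_s$ and $x=(x_1,\ldots,x_s)$, collect the indices $P=\{i:x_i\neq\bold 0\}=\{h_1,\ldots,h_r\}$, and form the sub-ring $R'=R_{h_1}\times\cdots\times R_{h_r}$ together with $y=(x_{h_1},\ldots,x_{h_r})\in R'$. Note that $y_i\neq\bold 0$ for every $i\in\{1,\ldots,r\}$ by construction, so $R'$ and $y$ satisfy the hypothesis of Theorem~\ref{Ramanujan_characterization}. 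One subtle point to check is that the normalization $\tfrac{|R_1|}{m_1}\le\cdots\le\tfrac{|R_s|}{m_s}$ imposed on $R$ in the Preliminaries descends to $R'$ (it does, since deleting factors preserves the ordering), so the indexing conventions $I_{y_i}$, $M_{y_i}$, $\mathbb F_{q}$-versus-$C_p(0)$ used in the statement are legitimate.

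The second step is the spectral comparison, which is essentially already spelled out in the paragraph preceding the theorem: by Theorem~\ref{R_as_a_product of local rings}, ${\rm Cay}(R,xR^*)\cong\otimes_{i=1}^{s}{\rm Cay}(R_i,x_iR_i^*)$ and ${\rm Cay}(R',yR'^*)\cong\otimes_{i\in P}{\rm Cay}(R_i,x_iR_i^*)$; and for $i\notin P$ the adjacency matrix of ${\rm Cay}(R_i,x_iR_i^*)$ is the identity $I_{|R_i|}$, so by Theorem~\ref{Eigenvalues_of_tensor_product_of_graphs} tensoring with these extra factors only multiplies all eigenvalues by $1$ and inflates multiplicities by the constant factor $\prod_{i\in P^c}|R_i|$. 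Consequently ${\rm Cay}(R,xR^*)$ and ${\rm Cay}(R',yR'^*)$ have exactly the same \emph{set} of eigenvalues. Moreover both graphs are regular of the same degree $|xR^*|=|yR'^*|$ (using $x_iR_i^*=\{\bold 0\}$ has size... no — one must instead observe $|xR^*|=\prod_{i\in P}|x_iR_i^*|=|yR'^*|$), so the value $2\sqrt{k-1}$ in the Ramanujan condition is identical for the two graphs, and $k$ itself occurs as an eigenvalue of each with the same status. Hence ${\rm Cay}(R,xR^*)$ is Ramanujan if and only if ${\rm Cay}(R',yR'^*)$ is.

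Finally I would invoke Theorem~\ref{Ramanujan_characterization} applied to the ring $R'$ with the element $y$, whose coordinates are all non-zero: ${\rm Cay}(R',yR'^*)$ is Ramanujan if and only if $(R',y)$ satisfies one of the eight conditions (i)--(viii) of that theorem, which are verbatim conditions (i)--(viii) of the present statement with $s$ replaced by $r$ and $x_i$ by $y_i$. Chaining the two equivalences finishes the proof. I do not expect a genuine obstacle here; the only things to be careful about are (a) making the regularity/degree equality explicit so that the threshold $2\sqrt{k-1}$ transfers, (b) confirming that multiplying multiplicities by a positive constant and scaling eigenvalues by $1$ cannot turn a non-Ramanujan graph into a Ramanujan one or vice versa (trivially true since the eigenvalue \emph{values} and the degree are unchanged), and (c) the bookkeeping that the ordering convention and the ``$\mathbb F_q$ if $y_i\in R_i^*$, else $C_p(0)$'' dichotomy carry over intact to $R'$.
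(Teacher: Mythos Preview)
Your proposal is correct and follows essentially the same approach as the paper: the paper's proof is literally ``Proof follows from previous theorem,'' with the preceding paragraph carrying out exactly the reduction you describe (form $R'$ and $y$ from the nonzero coordinates, observe via Theorems~\ref{R_as_a_product of local rings} and~\ref{Eigenvalues_of_tensor_product_of_graphs} that ${\rm Cay}(R,xR^*)$ and ${\rm Cay}(R',yR'^*)$ share the same eigenvalue set, hence one is Ramanujan iff the other is, then invoke Theorem~\ref{Ramanujan_characterization}). Your write-up is in fact more careful than the paper's, since you make explicit the degree equality $|xR^*|=|yR'^*|$ and the preservation of the ordering convention, both of which the paper leaves implicit.
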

\begin{proof} Proof follows from previous theorem.
\end{proof}

\section*{Acknowledgements} The first author is supported by Junior Research Fellowship from CSIR, Government of India (File No. 09/1020(15619)/2022-EMR-I).

\end{document}